\newcommand{\p}{\mathbb{P}}
\newcommand{\F}{\mathbb{F}}
\newcommand{\lra}{\longrightarrow}
\newcommand{\ff}{\mathcal{F}}
\newcommand{\fq}{\mathbb{F}_q}
\newcommand{\kk}{\mathbb K}
\newcommand{\xx}{\mathcal X}
\newcommand{\yy}{\mathcal Y}
\newcommand{\cc}{\mathcal C}
\newcommand{\Char}{\operatorname{char}}
\newcommand{\rank}{\operatorname{rank}}
\newcommand{\Tan}{\operatorname{Tan}}
\newcommand{\fqc}{\overline{\mathbb{F}}_q}
\theoremstyle{plain}
\newtheorem{thm}{Theorem}[section]
\newtheorem{prop}[thm]{Proposition}
\newtheorem{lem}[thm]{Lemma}
\newtheorem{cor}[thm]{Corollary}
\newtheorem{rem}[thm]{Remark}
\newtheorem{ex}[thm]{Example}
\title{Duality for certain multi-Frobenius nonclassical curves in higher dimensional spaces}
\author{Nazar Arakelian}
\begin{document}

\maketitle
\begin{abstract}
We show how a type of multi-Frobenius nonclassicality of a curve defined over a finite field $\fq$ of characteristic $p$ reflects on the geometry of its strict dual curve. In particular, in such cases we may describe all the possible intersection multiplicities of its strict dual curve with the linear system of hyperplanes. Among other consequence,  using a result by Homma, we are able to construct nonreflexive space curves such that their tangent surfaces are nonreflexive as well, and the image of a generic point by a Frobenius map is in its osculating hyperplane. We also obtain generalizations and improvements of some known results of the literature.
\end{abstract}

\emph{Keywords}: Algebraic curves, dual curves, finite fields.

\section {Introduction}\label{intro}

For an algebraically closed field $\kk$, let $\xx \subset \p^n(\kk)$ be a (projective, geometrically irreducible, algebraic) curve defined over $\kk$, where $\p^n(\kk)$ denotes the $n$-dimensional projective space defined over $\kk$. It is well known from the literature that, in contrast to the zero characteristic case, we can expect many pathological behavior of the geometry of $\xx$ when the characteristic of $\kk$ is positive. For instance, we know that there are $n+1$ non-negative distinct integers $\varepsilon_0,\ldots,\varepsilon_n$ that are realized as intersection multiplicities of $\xx$ and hyperplanes of $\p^n(\kk)$ at a generic point of $\xx$. The sequence of such numbers is called the order sequence of $\xx$. When $\Char(\kk)=0$, these numbers are precisely $0,1,2,\ldots,n$. However, when $\Char(\kk)>0$ this is not necessarily true, and the determination of such sequence can be a hard task in many cases. For details, see e.g. \cite{SV}.

Another remarkable example of differences between the Theory of Curves in zero and positive characteristic lies on the reflexivity and duality problems. If $(\p^{n}(\kk))^{'}$ denotes the dual projective space, then the strict dual of $\xx$, denoted by $\xx^{'}$, is defined as the Zariski closure in $(\p^{n}(\kk))^{'}$ of the set of osculating hyperplanes at the nonsingular locus of $\xx$. The strict dual curve $\xx^{'}$ is a subcover of $\xx$, and the cover $\gamma:\xx \lra \xx^{'}$ is called the strict Gauss map of $\xx$. Denote by $\gamma^{'}$ the strict Gauss map of $\xx^{'}$ and by $\xx^{''}$ the strict dual of $\xx^{'}$. If $\Char(\kk)=0$, one has $\xx^{''}=\xx$ and $\gamma^{'}\circ \gamma$ is the identity map. This situation does not necessarily hold in positive characteristic. In fact, if $\Char(\kk)=p>0$ and the order sequence of $\xx$ is not the classical sequence $(0,1,\ldots,n)$, then both separable and inseparable degrees of $\gamma$ can be positive, see \cite{Ar,GV0} for instance. Moreover, it follows from a result by Kaji in \cite{Ka2} that given $(\cc,r,s)$, where $\cc$ is a plane curve defined over $\kk$ and $r,s$ are positive integers with $\gcd(p,r)=1$, there exists a plane curve $\xx$ such that $\xx^{'}=\cc$, and the strict Gauss map of $\xx$ has separable and inseparable degree equal to $r$ and $p^s$,  respectively.

There are many other examples of pathological phenomena that happens to curves defined over fields of positive characteristic, see for instance \cite{Fu,He,HV,Ho,Ho2,Ka,Kl}. In this paper, we study how two of these phenomena interact to each other for curves defined over a finite field $\fq$, that is, we investigate how certain type of multi-Frobenius nonclassicality of a curve defined over $\fq$ can affect the geometry of its strict dual, and vice versa. More precisely, let $\xx \subset \p^n(\fqc)$ be a curve defined over $\fq$, where $q$ is a power of a prime $p$. The type of $\F_{q^m}$-Frobenius nonclassicality that we are considering here is the one that the image of a generic point of $\xx$ by the $\F_{q^m}$-Frobenius map belongs to the osculating hyperplane to $\xx$ at such point, where $m>0$. After presenting the background results in Section 2, in Section 3, we provide a characterization of curves that are $\F_{q^m}$-Frobenius nonclassical for $n-1$ different powers of $q$ (in the sense that we just describe) under certain conditions. This characterization will be in terms of properties of its strict duals, namely the order sequence, the strict Gauss map and the Frobenius nonclassicality as well. We will also provide less technical sufficient conditions for the validity of an implication of this characterization. In particular, we are able to describe the order sequence and $\F_{q^m}$-Frobenius order sequence of the strict duals of such multi-Frobenius nonclassical curves for different values of $m$. As well as the order sequence of a curve defined over $\fq$, the determination of $\F_{q^m}$-Frobenius order sequence can be difficult in many instances. However, the Frobenius order sequence of a curve is a key ingredient to obtain bounds for the number of its $\F_{q^m}$-rational points, see \cite{AB,SV}.   

Finally, as an application of our results, in Section 4 we construct nonreflexive curves in projective spaces of dimension $n \geq 3$ that are multi-Frobenius (resp. Frobenius) nonclassical when $n>3$ (resp. $n=3$). Thanks to a Homma's result, when $n=3$, this construction provides examples of nonreflexive space curves $\xx$ with their tangent surface $\Tan(\xx)$ being non-reflexive as well, such that the image of a Frobenius map of a generic point of $\xx$ is in the osculating hyperplane to $\xx$ at such point.  

Another feature of the paper that is worthwhile to point out is that part of the results obtained here can be seen as generalizations or improvements of some known results that appear in \cite{Ar,GV0,HV}, as we point out along the text. 

\section{Background}\label{back}

Let $\fq$ be the finite field of order $q$, where $q=p^h$ with $h>0$ and $p>0$ is a prime number, and let $\xx \subset \p^n(\fqc)$ be a (projective, geometrically irreducible, nondegenerate, algebraic) curve defined over $\fq$, where $\fqc$ denotes the algebraic closure of $\fq$, with $n \geq 2$. Given an algebraic extension $\mathbb{H}$ of $\fq$, the function field of $\xx$ over $\mathbb{H}$ will be denoted by $\mathbb{H}(\xx)$. If $X_0,\ldots,X_n$ denote the projective coordinates of $\p^n(\fqc)$, the affine coordinate functions of $\xx$ are $x_0,\ldots,x_n$, where $x_i$ is the projection of $X_i/X_0$ in $\fq(\xx)$ for $i=0,\ldots,n$.  Let $P \in \xx$ be a nonsingular point. Then, there exists a sequence $(j_0(P), \ldots,j_n(P))$ of integers with $0 \leq j_0(P)<\cdots<j_n(P)$, which is defined by all the possible intersection multiplicities of $\xx$ and some hyperplane of $\p^n(\fqc)$ at $P$. Such sequence is  called the order sequence of $P$. From \cite[Section 1]{SV}, we know that except for finitely many points on $\xx$, this sequence is the same. It is called order sequence of $\xx$, and denoted by $(\varepsilon_0,\ldots, \varepsilon_n)$. If $(\varepsilon_0,\ldots, \varepsilon_n)=(0,\ldots,n)$, the curve $\xx$ is said to be a classical curve. Otherwise, $\xx$ is nonclassical. Given a nonsingular point $P \in \xx$, the osculating hyperplane to $\xx$ at $P$, denoted by $H_P(\xx) \subset \p^n(\fqc)$, is the unique hyperplane such that the intersection multiplicity of $H_P(\xx)$ and $\xx$ at $P$ is $j_n(P)$ (see \cite[Section 1]{SV}).

Let $\zeta \in \fqc(\xx)$ be a separating element and let $k$ be a non-negative integer. Given $f \in \fqc(\xx)$, the $k$-th Hasse derivative of $f$ with respect to $\zeta$ is denoted by $D_\zeta^{(k)}f$. The following result follows from \cite[Section 1]{SV}. 

\begin{prop}\label{oschypeq}
Suppose that $\xx$ is defined by the coordinate functions $x_0,\ldots,x_n \in \fq(\xx)$.
\begin{itemize}
\item[(a)] The order sequence of $\xx$ is the minimal sequence, with respect to the lexicographic order, for which
$$
\det\left(D_\zeta^{(\varepsilon_i)}x_j\right)_{0 \leq i,j \leq n} \neq 0,
$$
where $\zeta \in \fqc(\xx)$ is a separating element.
\item [(b)] Let $P \in \xx$ be a nonsingular point and let $t$ be a local parameter at $P$. The osculating hyperplane at $P$ is spanned by the points 
$
\big((D_t^{(j_i)}x_0)(P):\cdots: (D_t^{(j_i)}x_n)(P) \big),
$
 for $i=0,\ldots,n-1$.
\end{itemize}
\end{prop}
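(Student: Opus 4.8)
The plan is to reduce Proposition~\ref{oschypeq} to standard facts about Wronskian-type matrices built from Hasse derivatives, following \cite[Section~1]{SV}. First I would fix a separating element $\zeta \in \fqc(\xx)$ and consider the matrix $M = \bigl(D_\zeta^{(k)}x_j\bigr)_{k \geq 0,\ 0 \leq j \leq n}$ with infinitely many rows indexed by $k$ and $n+1$ columns. Because $\xx$ is nondegenerate, the columns (the coordinate functions, viewed up to the relation $x_0 = 1$ after dehomogenizing, or more precisely the functions $1, x_1/x_0, \ldots, x_n/x_0$) are linearly independent over $\fqc$, so $M$ has rank $n+1$; hence there is a least tuple $0 \leq e_0 < e_1 < \cdots < e_n$ of row indices, in the lexicographic order, for which the corresponding $(n+1)\times(n+1)$ minor is nonzero. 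The content of part (a) is that this tuple $(e_0,\ldots,e_n)$ coincides with the order sequence $(\varepsilon_0,\ldots,\varepsilon_n)$, i.e. with the generic intersection-multiplicity data.

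For part (a), the key input I would invoke is the interpretation of intersection multiplicity via vanishing of Hasse derivatives: if $H : \sum a_j X_j = 0$ is a hyperplane through a point $P$ with local parameter $t$, then the intersection multiplicity of $H$ with $\xx$ at $P$ equals the order of vanishing at $P$ of the function $\sum_j a_j x_j$ (normalized), which in turn is the smallest $k$ with $\sum_j a_j (D_t^{(k)}x_j)(P) \neq 0$. Therefore the set of attained intersection multiplicities at a generic $P$ is exactly the set of ``pivot'' indices of the matrix $\bigl(D_t^{(k)}x_j\bigr)$ at $P$, and one must check that (i) generically the Hasse derivative with respect to a local parameter $t$ and with respect to the global separating element $\zeta$ differ only by the invertible factor coming from the chain rule $D_t = (D_\zeta t)\,D_\zeta + \cdots$, so the rank behaviour is unchanged, and (ii) for all but finitely many $P$ the value of the relevant minors is nonzero precisely when the corresponding generic determinant over $\fqc(\xx)$ is nonzero. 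This semicontinuity/genericity step is where I expect the only real care to be needed; the lexicographic-minimality characterization then follows because a hyperplane realizing intersection multiplicity $\varepsilon_i$ corresponds to solving a linear system whose solvability is governed exactly by which lower minors vanish.

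For part (b), I would argue as follows. By definition $H_P(\xx)$ is the hyperplane with $I(H_P \cap \xx, P) = j_n(P)$, the maximal order. Writing $H_P : \sum_j a_j X_j = 0$, maximality of the contact order means $\sum_j a_j (D_t^{(j_i)}x_j)(P) = 0$ for every $i = 0,\ldots,n-1$ (these being the orders strictly below $j_n(P)$ in the order sequence at $P$), while $\sum_j a_j (D_t^{(j_n)}x_j)(P) \neq 0$. Thus the vector $(a_0:\cdots:a_n)$ is, up to scalar, the unique element of the kernel of the $n \times (n+1)$ matrix $\bigl((D_t^{(j_i)}x_j)(P)\bigr)_{0 \leq i \leq n-1,\ 0 \leq j \leq n}$ — uniqueness because that matrix has rank $n$ (its rows span an $n$-dimensional space, by part (a) applied at $P$, since adjoining the row for $j_n$ gives a nonzero determinant). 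Dually, a hyperplane $\sum a_j X_j = 0$ passes through a point $(b_0:\cdots:b_n)$ iff $\sum a_j b_j = 0$, so the condition ``$H_P$ contains the $n$ points $\bigl((D_t^{(j_i)}x_0)(P):\cdots:(D_t^{(j_i)}x_n)(P)\bigr)$'' is literally the system above; since those $n$ points are linearly independent they span a hyperplane, which must be $H_P(\xx)$. I would close by remarking that the same reasoning shows the osculating hyperplane is independent of the choice of local parameter $t$, as the points only change by an overall invertible triangular transformation. The main obstacle in the whole argument is purely the genericity bookkeeping in part (a) — ensuring the finitely many exceptional points are genuinely excluded and that the chain rule for Hasse derivatives behaves as claimed across change of separating element — and for that I would simply cite the corresponding statements in \cite[Section~1]{SV}.
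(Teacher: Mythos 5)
Your argument is correct and is precisely the standard St\"ohr--Voloch argument that the paper itself does not reproduce but simply cites (\cite[Section 1]{SV}): the Hasse-derivative interpretation of intersection multiplicities, the pivot/Wronskian characterization of the order sequence, and the identification of the osculating hyperplane with the span of the $n$ linearly independent derivative points via the rank-$n$ kernel computation. Nothing further is needed.
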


If $\xx$ is defined over $\fq$ by coordinate functions $x_0,\ldots,x_n \in \fq(\xx)$, by \cite[Proposition 2.1]{SV}, there exists a sequence of non-negative integers $(\nu_0,\ldots,\nu_{n-1})$, chosen minimally in the lexicographic order, such that
\begin{equation}\label{fncl}
\left|
  \begin{array}{ccc}
  x_0^q & \ldots & x_n^q \\
  D_\zeta^{(\nu_0)}x_0 & \ldots & D_\zeta^{(\nu_0)}x_n \\
   \vdots & \cdots & \vdots \\
  D_\zeta^{(\nu_{n-1})}x_0 & \cdots & D_\zeta^{(\nu_{n-1})}x_n
  \end{array}
  \right| \neq 0.
  \end{equation}
This sequence is called $\fq$-Frobenius order sequence of $\xx$. It does not depend on $\zeta$ and it is invariant under change of projective coordinates of $\xx$ (see \cite[Proposition 1.4]{SV}). The curve $\xx$ is called $\fq$-Frobenius classical if $(\nu_0,\ldots,\nu_{n-1})=(0,\ldots,n-1)$, and $\fq$-Frobenius nonclassical otherwise. From \cite[Proposition 2.1]{SV}, there exists $I \in \{1, \ldots,n\}$ for which $\{\nu_0,\ldots,\nu_{n-1}\}=\{\varepsilon_0,\ldots, \varepsilon_n\} \backslash \{\varepsilon_I\}$.

Let $(\p^n(\fqc))^{'}$ be the dual projective space. The strict Gauss map of $\xx$, denoted by $\gamma$, is the map defined in the nonsingular locus of $\xx$ by $\gamma(P)=(H_P(\xx))^{'} \in (\p^n(\fqc))^{'}$, where $L^{'}$ denotes the dual of a hyperplane $L$. The Zariski closure of the image of $\gamma$ is the strict dual of $\xx$, denoted by $\xx^{'}$. Note that $\gamma$ can be extended to a morphism of a nonsingular model of $\xx$. For simplicity, we will denote such morphism also by $\gamma$.  

For a given $r>0$, the $\F_{p^r}$-Frobenius map $\Phi_{p^r}:\xx \lra \p^n(\fqc)$ is defined by $\Phi_{p^r}:P \mapsto P^{p^r}$. It induces a map from the function field $\fqc(\xx)$ onto $\fqc(\xx)^{p^r}=\{f^{p^r} \ | \ f \in \fqc(\xx)\}$. Therefore, $\Phi_{p^r}$ is purely inseparable of degree $p^r$. We point out that although $\Phi_q$ is purely inseparable, we have $\Phi_q: \xx \lra \xx$, as $\xx$ is defined over $\fq$.

Suppose that $p>n$ and $\varepsilon_n=p^r$ for some $r>0$. If $\xx$ is defined by the coordinate functions $x_0,\ldots,x_n$, then \cite[Theorem 7.65]{HKT} ensures that there exist $z_0,\ldots,z_n \in \fq(\xx)$ such that $z_i$ is a separating element for at least one $i$ and
\begin{equation}\label{e}
z_0^{p^r}x_0+\cdots+z_n^{p^r}x_n=0.
\end{equation}
Furthermore, for a generic point $P \in \xx$, the osculating hyperplane to $\xx$ at $P$ is defined by
$$
H_P(\xx):(z_0^{p^r}(P))X_0+\cdots+(z_n^{p^r}(P))X_n=0.
$$
Therefore, in such case we conclude that $\gamma=\Phi_{p^r} \circ \gamma_s=(z_0^{p^r}:\cdots:z_n^{p^r})$, where $\gamma_s=(z_0:\cdots:z_n)$ is a separable morphism. In particular, $\deg_s \gamma= \deg \gamma_s$ and $\deg_i \gamma =\deg \Phi_{p^r}=p^r$.

We now recall the definition of the conormal variety. Let $\kk$ be an arbitrary field and  let $V \subset \p^n(\kk)$ be a closed variety. The nonsingular locus of $V$ is denoted by $V_{\text{reg}}$. Let $T_PV$ be the tangent space to $V$ at $P \in V_{\text{reg}}$.  The conormal variety of $V$ is defined by
$$
C(V) =\overline{\{(P,H^{'}) \ | \ T_PV \subset H \}} \subset V \times (\p^n(\kk))^{'},
$$
where the overline denotes the Zariski closure of the set and $H$ is a hyperplane. The image of $C(V)$ in $(\p^n(\kk))^{'}$ under the second projection is denoted by $V^{*}$, and it is called the dual of $V$ (note that this coincides with the strict dual when $V$ is plane curve). The variety $V$ is said to be reflexive if $C(V)=C(V^*)$, and nonreflexive otherwise. A curve $\xx \subset \p^n(\fqc)$, with order sequence $(\varepsilon_0,\ldots,\varepsilon_n)$ is nonreflexive if, and only if, $\varepsilon_2>2$ or $p=2$, see \cite{HV} and \cite{He}.

In what follows, for a given curve $\xx$, we will denote by $\tilde{\xx}$ a fixed nonsingular model of $\xx$, and the points of $\tilde{\xx}$ will be seen as branches of $\xx$. 

\section{Duality for multi-Frobenius nonclassical curves}\label{mainl}

In this section, we will give a characterization of certain multi-Frobenius nonclassical curves in terms of their strict dual curves. With such characterization on hands, we are able to present sufficient criteria for the strict dual of some curves to have some interesting geometrical properties. We start with the following Proposition, which will be a key ingredient in the proof of one of the main results, namely, Theorem \ref{main-1}.

\begin{prop}\label{ordseqdua}
Let $\yy \subset \p^n(\overline{\kk})$, with $p>n$, be a nondegenerate curve defined over a perfect field $\kk$ of characteristic $p>0$ by the coordinate functions $y_0,\ldots,y_n \in \kk(\yy)$. Assume that there exist an integer $r>0$ and $u_0,\ldots,u_n \in \kk(\yy)$ such that 
\begin{equation}\label{rel1}
u_0^{p^{r}}y_0+u_1^{p^{r}}y_1+\cdots+u_n^{p^{r}}y_n=0.
\end{equation}
Then the hyperplane
$
H^{r}_P: (u_0(P))^{p^{r}}Y_0+(u_1(P))^{p^{r}}Y_1+\cdots+(u_n(P))^{p^{r}}Y_n=0
$
is such that $I(P, \yy \cap H^{r}_P) \geq p^{r}$ for a generic point $P \in \tilde{\yy}$, and the equality holds if, and only if,
\begin{equation}\label{rel2}
\left(D_\zeta^{(1)} u_0\right)^{p^{r}}y_0+\left(D_\zeta^{(1)} u_1\right)^{p^{r}}y_1+\cdots+\left(D_\zeta^{(1)} u_n\right)^{p^{r}}y_n\neq0,
\end{equation}
where $\zeta \in \kk(\yy)$ is a separating element.
\end{prop}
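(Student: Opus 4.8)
The plan is to work with a local parameter at a generic point $P \in \tilde{\yy}$ and expand everything as power series. Since $\kk$ is perfect and $p > n$, a generic point is nonsingular and we may fix a separating element $\zeta \in \kk(\yy)$; by genericity we can arrange that $\zeta - \zeta(P)$ is a local parameter $t$ at $P$. The Hasse derivatives $D_\zeta^{(k)}$ then agree with the Taylor coefficients in $t$, so for each coordinate function we write $y_j = \sum_{k \geq 0} (D_\zeta^{(k)} y_j)(P)\, t^k$, and similarly for the $u_j$. The key observation is that the coefficients of $H^r_P$ are the constants $(u_j(P))^{p^r}$, so the intersection multiplicity $I(P, \yy \cap H^r_P)$ is exactly the order of vanishing at $t = 0$ of the function $g := \sum_j (u_j(P))^{p^r} y_j \in \kk(\yy)$.

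First I would use relation \eqref{rel1} to control the low-order terms of $g$. Raising a power series to the $p^r$-th power is additive and multiplies exponents by $p^r$, so $\sum_j u_j^{p^r} y_j = 0$ means that for every $m$,
\[
\sum_{k=0}^{m} \Big(\sum_{\substack{a p^r + b = k}} \big(D_\zeta^{(a)} u_j\big)(P)^{p^r}\,\big(D_\zeta^{(b)} y_j\big)(P)\Big) = 0,
\]
but more usefully, the coefficient of $t^\ell$ in $\sum_j u_j^{p^r} y_j$ vanishes for all $\ell$. Now compare with $g = \sum_j (u_j(P))^{p^r} y_j = \sum_j \big(D_\zeta^{(0)} u_j\big)(P)^{p^r} y_j$. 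For $1 \leq \ell \leq p^r - 1$, the term $\big(D_\zeta^{(a)} u_j\big)^{p^r}$ with $a \geq 1$ contributes only to exponents $\geq p^r$, so the coefficient of $t^\ell$ in $g$ equals the coefficient of $t^\ell$ in $\sum_j u_j^{p^r} y_j$, which is $0$. Likewise the constant term of $g$ equals the constant term of $\sum_j u_j^{p^r} y_j = 0$. Hence $g$ vanishes to order at least $p^r$ at $P$, giving $I(P, \yy \cap H^r_P) \geq p^r$.

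Next I would identify the coefficient of $t^{p^r}$ in $g$ to get the equality criterion. In $\sum_j u_j^{p^r} y_j = 0$, the coefficient of $t^{p^r}$ receives contributions from $(a,b)$ with $a p^r + b = p^r$, i.e. $(a,b) = (1,0)$ and $(a,b) = (0, p^r)$; so
\[
0 = \sum_j \big(D_\zeta^{(1)} u_j\big)(P)^{p^r}\, y_j(P) + \sum_j (u_j(P))^{p^r}\,\big(D_\zeta^{(p^r)} y_j\big)(P).
\]
On the other hand the coefficient of $t^{p^r}$ in $g = \sum_j (u_j(P))^{p^r} y_j$ is exactly $\sum_j (u_j(P))^{p^r}\big(D_\zeta^{(p^r)} y_j\big)(P)$, which by the displayed identity equals $-\sum_j \big(D_\zeta^{(1)} u_j\big)(P)^{p^r}\, y_j(P)$. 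Therefore $I(P, \yy \cap H^r_P) = p^r$ if and only if this quantity is nonzero, i.e. if and only if $\sum_j \big(D_\zeta^{(1)} u_j\big)^{p^r} y_j$ does not vanish at the generic point $P$ — which is precisely condition \eqref{rel2} (a nonzero element of $\kk(\yy)$ is nonzero at a generic point, and conversely).

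The main obstacle I anticipate is bookkeeping rather than conceptual: one must be careful that $H^r_P$ is genuinely a hyperplane (the $u_j(P)$ are not all zero at a generic $P$ — this follows from \eqref{rel1} together with nondegeneracy of $\yy$, since $(u_0 : \cdots : u_n)$ being identically the zero "section" would force a linear relation among the $y_j$), and that the generic choice of $P$ simultaneously makes $t = \zeta - \zeta(P)$ a local parameter, keeps $P$ nonsingular, and makes the finitely many "bad" coefficient conditions behave generically. A minor subtlety is the interplay of Hasse derivatives with $p^r$-th powers: one uses that $D_\zeta^{(k)}(f^{p^r}) = 0$ unless $p^r \mid k$, in which case it equals $\big(D_\zeta^{(k/p^r)} f\big)^{p^r}$ — the clean statement that raising to the $p^r$ power just rescales the index — and this is what makes the two displayed coefficient computations go through.
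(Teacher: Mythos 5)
Your proposal is correct and follows essentially the same route as the paper: expand everything in a local parameter at a generic point, use that $p^r$-th powers only contribute at exponents divisible by $p^r$ to get the lower bound from \eqref{rel1}, and identify the coefficient of $t^{p^r}$ of $\sum_j (u_j(P))^{p^r}y_j$ as $-\sum_j \bigl(D^{(1)}u_j\bigr)(P)^{p^r}y_j(P)$. The paper packages the same computation as $\sum_k (u_k-u_k(P))^{p^r}y_k$ and notes the $\zeta$-independence via the chain rule, but the argument is identical in substance.
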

\begin{proof}
If none of the $u_k$ is a separating element, then we clearly have $I(P, \yy \cap H^{r}_P) > p^{r}$ for a generic point $P \in \tilde{\yy}$. Thus, we may assume that $u_k$ is a separating element for at least one $k$. Let $P \in \tilde{\yy}$ be a generic point. We may assume that $P$ is neither a zero nor a pole of the functions $y_k, u_k$ for $k=0,1,\ldots,n$, say
$$
y_k=\sum_{m=0}^{\infty} a_{km}t^{m} \ \ \ \text{ and } \ \ \ u_k=\sum_{m=0}^{\infty} b_{km}t^{m},
$$ 
where $t \in \overline{\kk}(\yy)$ is a local parameter at $P$. In particular,
\begin{equation}\label{derhas}
D_t^{(1)} u_k= b_{k1}+2b_{k2}t+\cdots,
\end{equation}
for $k=1,\ldots,n$. If $u_j$ is a separating element, then we also may assume that $P$ is unramified in the cover $\pi_j=(1:u_j):\tilde{\yy} \lra \p^1(\overline{\kk})$, hence $b_{j1} \neq 0$. Since $u_k(P)=b_{k0}$, we obtain  
$$
\sum_{k=0}^{n}(u_k-u_k(P))^{p^{r}}y_k=\left(\sum_{k=0}^{n}b_{k1}^{p^r }a_{k0}\right) t^{p^{r}}+\cdots.
$$
Using \eqref{rel1} we have
$$
I(P, \yy \cap H^{r}_P)=v_{P}\left(\sum_{k=0}^{n}(u_k(P))^{p^{r}}y_k\right)=v_{P}\left(\sum_{k=0}^{n}(u_k-u_k(P))^{p^{r}}y_k\right)\geq p^r,
$$
and equality holds if, and only if, $\sum\limits_{k=0}^{n}b_{k1}^{p^r }a_{k0} \neq 0$. In turn, from \eqref{derhas} we have that $\sum\limits_{k=0}^{n}b_{k1}^{p^r }a_{k0} \neq 0$ is equivalent to \eqref{rel2} for $\zeta=t$. Since $D_\zeta^{(1)}=\frac{dt}{d \zeta}D_t^{(1)}$ for all separating element $\zeta$, we clearly have that \eqref{rel2} does not depend on the choice of the separating element. This finishes the proof.
\end{proof}

\begin{rem}\label{gen765}
 In \cite[Theorem 1]{GV0}, Garcia and Voloch characterized linearly independent sets of fields $\kk$ with positive characteristic over the subfields $\kk^{p^r}=\{u^{p^r}  \ | \ u \in \kk\}$ in terms of generalized Wronskians. When restricted to algebraic function fields of one variable over a perfect full constant field of positive characteristic, Proposition \ref{ordseqdua} complements  \cite[Theorem 1]{GV0}. Furthermore, from the uniqueness of the osculating hyperplane to a curve at a point, Proposition \ref{ordseqdua} might be seen as a refinement of \cite[Theorem 7.65(iii)]{HKT}. Indeed,  let $\yy$ be a curve as in Proposition \ref{ordseqdua}. In \cite[Theorem 7.65(iii)]{HKT} it is stated that if $\varepsilon_{n-1}<p^r$ and \eqref{rel1} holds, then $\varepsilon_n=p^k$ for some $k \geq r$, and equality holds if, and only if, some $u_i$ is a separating element. In turn, $u_i$ being a separating element is equivalent to $D_\zeta^{(1)} u_i \neq 0$. Thus, if $\varepsilon_n=p^r$, then \eqref{rel2} must hold, otherwise we would have two osculating hyperplanes at a generic point, which is impossible. Conversely, if \eqref{rel2} holds, then we clearly have that $D_\zeta^{(1)} u_i \neq 0$ for some $i$, and then $u_i$ is separating, thence $\varepsilon_n=p^r$. In other words, \cite[Theorem 7.65(iii)]{HKT} can be seen as a version of Proposition \ref{ordseqdua} in the case that $H_P^{r}$ is a candidate to osculating hyperplane at $P$, while in Proposition \ref{ordseqdua} $H_P^{r}$ can have any intersection multiplicity with $\yy$ at $P$.
\end{rem}

The next example illustrates Proposition \ref{ordseqdua}.

\begin{ex}
Assume that $p>3$ and consider $\fqc(t)$ to be the function field of $\p^1(\fqc)$. Let $\xx=\phi(\p^1(\fqc)) \subset \p^3(\fqc)$ be the nondegenerate rational curve, where $\phi=(1:x:y:z)$ is defined by the parametrization
$$
x=t, \ \ y=-\frac{2}{3t^{p-1}} \ \ \text{ and } \ \ z=\left(\frac{2}{3t^{p-1}}\right)^{p^2}+\frac{t^{2p+1}}{3}. 
$$
One can check that $\xx$ is given by $\overline{\mathcal{S}_1} \cap \overline{{\mathcal{S}_2}}$, where
\begin{equation}\label{s1}
S_1: Y^{p^2}+X^{2p+1}+X^{3p}Y+Z=0
\end{equation}
and
\begin{equation}\label{s2}
S_2: 2+3X^{p-1}Y=0
\end{equation}
are affine surfaces. From \eqref{s1} we obtain $(y^{p})^{p}+(x^2)^px+(x^3)^py+z=0$ and from \eqref{s2} we obtain
$$
\left(D_t^{(1)}y^{p}\right)^p+\left(D_t^{(1)}x^{2}\right)^px+\left(D_t^{(1)}x^{3}\right)^py+\left(D_t^{(1)}1\right)^pz=0.
$$
For a generic $P \in \tilde{\xx}$, set $H_P:(y^{p}(P))^{p}+(x^2(P))^pX+(x^3(P))^pY+Z=0$. Then $I(P,H_P \cap \xx)=p+1$. Actually, the order sequence of $\xx$ is $(0,1,p,p+1)$.
\end{ex}

The following lemma will be used in the sequel of the paper. However, we point out that the results in it can be of independent interest.

\begin{lem}\label{multi}
Let $\xx \subset \p^n(\fqc)$ be a curve defined over $\fq$ with coordinate functions $x_0,\ldots,x_n$. Let $q_1<\ldots<q_s$ be powers of $q$, where $s\leq n$, and denote by $(\nu_{j,0},\ldots,\nu_{j,n-1})$ the $\F_{q_j}$-Frobenius order sequence of $\xx$, $j \in \{1,\ldots,s\}$. Then the following hold:
\begin{itemize}
\item[(a)] 
$$M_s:=\left(
\begin{array}{cccc}
x_0 & x_1 & \cdots & x_n\\
x_0^{q_1} & x_1^{q_1} & \cdots & x_n^{q_1}\\
\cdots & \cdots&\cdots&\cdots\\
x_0^{q_s} & x_1^{q_s} & \cdots & x_n^{q_s}
\end{array} \right)
$$
has rank smaller than $s+1$ if, and only if, there exists a linear subvariety $\p^{L}\subset\p^n(\fqc)$ of dimension $s-1$ such that for a generic point $P \in \xx$, we have that $\{P,\Phi_{q_1}(P),\ldots,\Phi_{q_s}(P)\} \subset \p^L$. In this case, if $(\varepsilon_0,\ldots,\varepsilon_n)$ denotes the order sequence of $\xx$ and $\{\Phi_{q_1}(P),\ldots,\Phi_{q_s}(P)\}$ is not in a proper linear subvariety of $\p^L$, then $\varepsilon_n \geq q_1$.
\item[(b)] Assume that $s<n$ and $\rank(M_s)=s+1$. Then, there exists a positive integer $c$ such that 
$$N_{c}(q_1,\ldots,q_s):=\left(
\begin{array}{cccc}
D_\zeta^{(c)} x_0&D_\zeta^{(c)} x_1 & \cdots & D_\zeta^{(c)} x_n\\
x_0 & x_1 & \cdots & x_n\\
x_0^{q_1} & x_1^{q_1} & \cdots & x_n^{q_1}\\
\cdots & \cdots&\cdots&\cdots\\
x_0^{q_s} & x_1^{q_s} & \cdots & x_n^{q_s}
\end{array} \right)
$$
has rank $s+2$, where $\zeta \in \fq(\xx)$ is a separating element. If $c$ is the minimal integer satisfying such a property, then for each $j$, there exists an integer $0<\ell_j\leq n-1$ such that $c=\nu_{j,\ell_j}$. Moreover, we have that $c>1$ implies $p| c$.
\end{itemize}
\end{lem}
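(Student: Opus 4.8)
The plan is to analyze the two displayed matrices $M_s$ and $N_c$ via the columns–are–coordinates dictionary already set up in Proposition \ref{oschypeq}, translating rank conditions into geometric statements about spans of points and osculating hyperplanes.

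For part (a), I would argue as follows. Over the function field $\fqc(\xx)$, the point $P$ has homogeneous coordinates $(x_0:\cdots:x_n)$ and $\Phi_{q_j}(P)$ has coordinates $(x_0^{q_j}:\cdots:x_n^{q_j})$. The rows of $M_s$ are exactly the coordinate vectors of $P,\Phi_{q_1}(P),\ldots,\Phi_{q_s}(P)$, so $\rank(M_s)<s+1$ says precisely that these $s+1$ points of $\p^n(\fqc(\xx))$ are linearly dependent, i.e. they lie in a common $\p^{s-1}$ defined over $\fqc(\xx)$; specializing at a generic $P\in\xx$ gives the stated $\p^L$, and conversely if such a $\p^L$ exists generically then the specialized rows are dependent at every generic point, forcing $\rank M_s<s+1$ as a matrix over the function field. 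For the last assertion: if the $s$ Frobenius images $\Phi_{q_1}(P),\ldots,\Phi_{q_s}(P)$ already span $\p^L=\p^{s-1}$, then the osculating flag of $\xx$ at $P$ must contain this $\p^{s-1}$ (since $\xx\subset\p^L$ generically, hence the whole curve lies in $\p^L$, contradicting nondegeneracy — so actually $P$ lies in $\p^L$ too but is not in the span of the Frobenius images). The intersection multiplicity at $P$ of the hyperplane through $\Phi_{q_1}(P),\ldots,\Phi_{q_s}(P)$ and a generic tangent direction: one shows $x_i-x_i^{q_1}$ and its analogues vanish to high order, so the osculating hyperplane $H_P(\xx)$, which passes through these Frobenius images, meets $\xx$ at $P$ with multiplicity at least $q_1$ along the branch, giving $\varepsilon_n=j_n(P)\geq q_1$. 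I would make the vanishing estimate precise by writing the local expansions $x_i=a_{i0}+a_{i1}t+\cdots$ at $P$, noting $x_i^{q_1}=a_{i0}^{q_1}+a_{i1}^{q_1}t^{q_1}+\cdots$, so any $\fqc$-linear combination of $P$ and the $\Phi_{q_j}(P)$ that kills the constant terms vanishes to order $\geq q_1$ in $t$ whenever the $\fqc$-coefficients are chosen among the Frobenius images (whose $t$-expansions have no terms of degree $1,\ldots,q_1-1$).

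For part (b), assume $\rank M_s=s+1$. The rows of $N_c$ are the derivative vector $D_\zeta^{(c)}(x_0,\ldots,x_n)$ together with the rows of $M_s$. Since $M_s$ has full rank $s+1$, there is a least $c\geq 1$ for which adjoining $D_\zeta^{(c)}$ makes the rank jump to $s+2$ (such $c$ exists because $\xx$ is nondegenerate, so the full Hasse-derivative filtration spans $\p^n$, and $n>s$). Fix this minimal $c$. To identify $c$ with a Frobenius order $\nu_{j,\ell_j}$ for each $j$: fix $j$ and consider the $\fq$-Frobenius-type Wronskian for $q_j$. The key point is that minimality of $c$ for the combined system $\{M_s\text{-rows}\}$ forces, for the single-Frobenius subsystem with row $x_i^{q_j}$, that $c$ appears in the minimal sequence of Hasse-derivative orders completing $(x_0^{q_j}:\cdots:x_n^{q_j})$ together with lower derivatives to full rank — precisely the defining property \eqref{fncl} of the $\F_{q_j}$-Frobenius order sequence. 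One must check that the orders $\nu_{j,0}=0<\nu_{j,1}<\cdots$ strictly below $c$ already appear among the "lower" data and that $c$ is the next forced jump, which follows from comparing the minimal sequences for $M_s$-augmented-by-derivatives with those for the $q_j$-row-augmented-by-derivatives, using that $q_1<\cdots<q_s$ and that $c<q_1$ would be needed (indeed $c=\nu_{j,\ell_j}$ with $\ell_j\geq 1$ since $c\geq 1$). Finally, $c>1\Rightarrow p\mid c$: this is the standard $p$-adic/Lucas-type argument for Hasse derivatives — by Proposition \ref{oschypeq} the full order sequence has $\varepsilon_1=1$, so the first nontrivial Frobenius order, when $>1$, must be a power of $p$ or at least divisible by $p$, because $D_\zeta^{(c)}$ applied to a relation forces a congruence $\binom{c}{1}\equiv 0$ style obstruction; more concretely, if $0<c<p$ then $D_\zeta^{(c)}$ is a genuine order-$c$ differential operator that cannot be skipped in the minimal completion of a $p$-th power column, contradicting minimality unless $c=1$.

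The main obstacle I anticipate is part (b): cleanly extracting, for \emph{each} index $j$ simultaneously, that the single minimal jump $c$ of the combined matrix $N_c$ coincides with some term $\nu_{j,\ell_j}$ of the $\F_{q_j}$-Frobenius order sequence. The subtlety is that the combined system mixes all the Frobenius powers $q_1,\ldots,q_s$ at once, whereas each $\nu_{j,\bullet}$ is defined by a one-Frobenius Wronskian; the argument needs a careful bookkeeping of which rows of $N_c$ are "responsible" for the rank jump at $c$, showing the $q_j$-row alone (with $P$ and lower derivatives) witnesses the same minimal $c$. I would handle this by a dimension-count on the osculating spaces: $\rank M_s=s+1$ means the span of $P$ and all $\Phi_{q_j}(P)$ is exactly $\p^s$, and the jump at $c$ records the first osculating order beyond this $\p^s$; intersecting with the single $\Phi_{q_j}$ and invoking that the $\F_{q_j}$-Frobenius orders are $\{\varepsilon_0,\ldots,\varepsilon_n\}\setminus\{\varepsilon_{I_j}\}$ pins $c$ to one of them. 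The divisibility $p\mid c$ then falls out of the same Hasse-derivative minimality as in the classical proof that $\varepsilon_i$'s and $\nu_i$'s past the first are $p$-power-related.
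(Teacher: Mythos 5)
Your rank equivalence in (a) is the paper's argument (the rows of $M_s$ are the coordinate vectors of $P,\Phi_{q_1}(P),\ldots,\Phi_{q_s}(P)$, and a rank drop over the function field specializes at a generic point and conversely). The genuine gap is in your proof of $\varepsilon_n\geq q_1$. A hyperplane that merely passes through $P$ and the points $\Phi_{q_j}(P)$ has no reason to meet $\xx$ at $P$ with multiplicity $\geq q_1$ (for $n=2$, $s=1$, a generic line through $P$ and $\Phi_{q_1}(P)$ meets $\xx$ at $P$ with multiplicity $1$), and your claim that the osculating hyperplane $H_P(\xx)$ ``passes through these Frobenius images'' is unjustified --- that is exactly the Frobenius nonclassicality one would be trying to detect, not a hypothesis. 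What makes the multiplicity estimate work is not incidence with the Frobenius images but the fact that the hyperplane's coefficients are values at $P$ of $q_1$-th powers of functions satisfying an \emph{identical} linear relation with the $x_i$. The paper obtains this by cofactor expansion: choose an $(s+1)\times(s+1)$ submatrix $M_s'$ of $M_s$ whose bottom $s\times(s+1)$ block of Frobenius rows has rank $s$ (this is where the hypothesis that $\{\Phi_{q_1}(P),\ldots,\Phi_{q_s}(P)\}$ actually spans $\p^L$ enters), expand $\det(M_s')=0$ along the first row, and note that the cofactors are $q_1$-th powers $z_i^{q_1}$ since every entry of the Frobenius rows is a $q_1$-th power; this yields $\sum_i z_i^{q_1}x_i=0$ with not all $z_i$ zero, and Proposition \ref{ordseqdua} then gives a hyperplane meeting $\xx$ at a generic $P$ with multiplicity $\geq q_1$, whence $\varepsilon_n\geq q_1$. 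Your local-expansion computation is essentially the proof of Proposition \ref{ordseqdua}, but without the identical relation it does not bound any intersection multiplicity.

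In (b), your existence argument for $c$ (nondegeneracy forces some Hasse derivative out of the $(s+1)$-dimensional row span of $M_s$) is fine, but the identification $c=\nu_{j,\ell_j}$ --- which you yourself flag as the main obstacle --- is left unproved, and your dimension count on osculating spaces does not close it. The paper's route is a direct contradiction: if $N_{\nu_{j,\ell}}(q_1,\ldots,q_s)$ had rank $s+1$ for every $\ell\in\{1,\ldots,n-1\}$, then all the vectors $D_\zeta^{(\nu_{j,\ell})}(x_0,\ldots,x_n)$ would lie in the span of $\{\phi,\Phi_{q_1}(\phi),\ldots,\Phi_{q_s}(\phi)\}$, of dimension $s+1\leq n$, so the $n+1$ rows of the $\F_{q_j}$-Frobenius Wronskian \eqref{fncl} would be linearly dependent over $\fq(\xx)$, contradicting its nonvanishing; minimality is handled by the same argument. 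You should run that argument in place of your sketch. (For the final claim $c>1\Rightarrow p\mid c$ the paper itself only refers to the argument of \cite[Theorem 3.7]{AB}, so your heuristic there is not the decisive issue.)
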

\begin{proof}
\begin{itemize}
\item[(a)] The coordinates of a generic point $P \in \xx$ are given by $(x_0(P):\cdots:x_n(P))$, hence $\Phi_{q_j}(P)=(x_0(P)^{q_j}:\cdots:x_n(P)^{q_j})$. If $A:=\{P,\Phi_{q_1}(P),\ldots,\Phi_{q_s}(P)\}$, denote by $M_s(P)$ the $(s+1)\times(n+1)$ matrix where the lines are the elements of $A$. If $\rank(M_s)<s+1$, then $\rank(M_s(P))<s+1$, which gives that the elements of $A$ seen as vector in $\fqc^{n+1}$ are linearly dependent, and then $A$ defines a projective subspace of dimension at most $s-1$. Conversely, assume that for a generic point $P \in  \xx$, $P$ and $\Phi_{q_j}(P)$ are in $\p^L$ for $j \in \{1,\ldots,s\}$, where $\p^L$ has (projective) dimension $s-1$. Then $\rank(M_s(P)) <s+1$, and since $P$ is generic, $\rank(M_s)<s+1$. For the later assertion, let $M_s^{'}$ be the $(s+1)\times (s+1)$ matrix obtained by omitting the last $n-s$ columns of $M_s$. Denote by $u_i$ the minor obtained by omitting the first line and the $i$-th column of $M_s^{'}$. Note that $u_i=z_i^{q_1}$ for some $z_i \in \fqc(\xx)$ and $u_i \neq 0$ for at least one $i$. Thus $\det(M_s^{'})=0$ gives that $\sum\limits_{i=0}^{s}z_i^{q_1}x_i=0$. The assertion then follows from Proposition \ref{ordseqdua}.
\item[(b)] For simplicity, let us fix a $j$ and denote $(\nu_{j,0},\ldots,\nu_{j,n-1})=(\nu_0,\ldots,\nu_{n-1})$. Set $\phi=(x_0,\ldots,x_n)$, $\Phi_{q_i}(\phi)=(x_0^{q^i},\ldots,x_n^{q^i})$ for $i=1,\ldots,s$ and $D_\zeta^{(k)}(\phi)=(D_\zeta^{(k)}x_0,\ldots,D_\zeta^{(k)}x_n)$ seen as elements of $\fq(\xx)^{n+1}$, where $k>0$. Suppose that $N_{\nu_\ell}(q_1,\ldots,q_s)$ has rank $s+1$ for all $0<\ell \leq n-1$. Then, for $\ell=1,\ldots,n-1$, the vectors $D_\zeta^{(\nu_\ell)}(\phi)$ belong to the $\fq(\xx)$-subspace spanned by $\{\phi,\Phi_{q_1}(\phi),\ldots,\Phi_{q_s}(\phi)\}$, which has dimension $s+1$. Thus, the set $\{\phi, \Phi_{q_j}(\phi),D_\zeta^{(\nu_1)}(\phi),\ldots,D_\zeta^{(\nu_{n-1})}(\phi)\}$ is linearly dependent over $\fq(\xx)$, which contradicts the definition of the Frobenius order sequence of a curve. Therefore, the former statement follows. The minimality follows by an analogous argument. The proof of the last statement is analogous to \cite[Theorem 3.7]{AB}, and will be omitted.
\end{itemize}
\end{proof}

We say that a curve $\xx \subset \p^n(\fqc)$ is strange if there exists a point $P \in \p^n(\fqc)$ such that all tangent lines to smooth points of $\xx$ meet in $P$. It is straightforward to check that a curve is strange if and only if there exists an inseparable projection of $\xx$ from a point $P$ onto a curve in $\p^{n-1}(\fqc)$. In order to prove one of our main results, the following proposition will be needed. It provides a characterization of certain multi-Frobenius nonclassical curves with degenerate strict dual. Note that such result is a version of \cite[Proposition 2]{GV0} with the additional hypotheses of our situation.

\begin{prop}\label{nondeg}
Let $\xx \subset \p^n(\fqc)$ be a nonstrange curve defined over $\fq$ with order sequence $(0,1,\varepsilon_2,\ldots,\varepsilon_n)$ such that $\varepsilon_n=p^r$, with $p>n>2$. Let $1 \leq m_1< \cdots < m_{s}$ be a sequence of integers, where $1 \leq s \leq n-1$. The following conditions are equivalent:
\begin{itemize}
\item[(a)]  $\xx$ is $\F_{q^{m_i}}$-Frobenius nonclassical for $i=1,\ldots,s$  such that the last term of the $\F_{q^{m_i}}$-Frobenius order sequence equals to $\varepsilon_n$ and the strict dual $\xx^{'}$ of $\xx$ is degenerate.
\item[(b)] $\xx$ lies on a cone over a nondegenerate curve $\cc \subset \p^{n-1}(\fqc)$ defined over $\fq$ with orders $\lambda_0<\cdots<\lambda_{n-1}=\varepsilon_n$ such that for a generic point $Q \in \cc$,  $\{Q,\Phi_{q^{m_1}}(Q),\ldots,\\ \Phi_{q^{m_{s}}}(Q)\}\subset H_Q(\cc)$, where $H_Q(\cc)$ is the osculating hyperplane to $\cc$ at $Q$. 
\end{itemize}
\end{prop}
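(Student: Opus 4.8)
The plan is to reduce both (a) and (b) to the single geometric statement that, for each $i$, the image of a generic point under $\Phi_{q^{m_i}}$ lies in the pertinent osculating hyperplane, and then to transport this statement between $\xx$ and $\cc$ along a projection whose centre lies on every osculating hyperplane of $\xx$. The first thing I would set up is the following dictionary: because $\varepsilon_n=p^r>n-1$, for a power $Q_0$ of $q$ the curve $\xx$ is $\F_{Q_0}$-Frobenius nonclassical with the last term of its $\F_{Q_0}$-Frobenius order sequence equal to $\varepsilon_n$ if and only if $\Phi_{Q_0}(P)\in H_P(\xx)$ for a generic $P\in\tilde{\xx}$. This is read off from Proposition \ref{oschypeq}: the condition $\Phi_{Q_0}(P)\in H_P(\xx)$ is the vanishing of the $(n+1)\times(n+1)$ determinant with first row $(x_0^{Q_0},\dots,x_n^{Q_0})$ and remaining rows the Hasse-derivative vectors $\big(D_\zeta^{(\varepsilon_i)}x_0,\dots,D_\zeta^{(\varepsilon_i)}x_n\big)$, $0\le i\le n-1$; by the minimality defining the $\F_{Q_0}$-Frobenius order sequence and \cite[Proposition 2.1]{SV}, this happens exactly when $(\varepsilon_0,\dots,\varepsilon_{n-1})$ is not that sequence, i.e. when its last term is $\varepsilon_n$, and nonclassicality is then automatic since $\varepsilon_n>n-1$. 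Thus (a) becomes ``$\Phi_{q^{m_i}}(P)\in H_P(\xx)$ generically for all $i$, and $\xx^{'}$ is degenerate'', while in (b) the clause $Q\in H_Q(\cc)$ is automatic.

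For $(b)\Rightarrow(a)$ I would fix coordinates with $\cc$ in the hyperplane $\{X_n=0\}\cong\p^{n-1}(\fqc)$ and vertex $V=(0:\dots:0:1)$, so that the coordinate functions of $\xx$ are $x_0,\dots,x_{n-1},x_n$ with $x_i=\pi_V^{*}y_i$ for $i<n$, where $y_0,\dots,y_{n-1}$ are the coordinate functions of $\cc$ and $\pi_V:\xx\to\cc$ is the projection from $V$, which is separable since $\xx$ is nonstrange. Since $p>n-1$ and $\lambda_{n-1}=p^r$, \cite[Theorem 7.65]{HKT} applied to $\cc$ provides $w_0,\dots,w_{n-1}\in\fq(\cc)$, one of them separating, with $\sum_{i<n}w_i^{p^r}y_i=0$ and $H_Q(\cc):\sum_{i<n}w_i(Q)^{p^r}X_i=0$. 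Setting $z_i=\pi_V^{*}w_i$ for $i<n$ and $z_n=0$ and pulling back gives $\sum_{i\le n}z_i^{p^r}x_i=0$ with some $z_i$ separating, so Proposition \ref{ordseqdua}, together with the fact that $\varepsilon_n=p^r$ is the maximal intersection multiplicity of $\xx$ with a hyperplane at a generic point, identifies $H_P(\xx):\sum_{i<n}z_i(P)^{p^r}X_i=0$; hence $\gamma=(z_0^{p^r}:\dots:z_{n-1}^{p^r}:0)$ and $\xx^{'}$ is degenerate. Pulling back the relation $\sum_{i<n}w_i^{p^r}y_i^{q^{m_i}}=0$ (which expresses $\Phi_{q^{m_i}}(Q)\in H_Q(\cc)$) gives $\sum_{i<n}z_i^{p^r}x_i^{q^{m_i}}=0$, i.e. $\Phi_{q^{m_i}}(P)\in H_P(\xx)$, and the dictionary yields (a).

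For $(a)\Rightarrow(b)$: since $\xx^{'}$ is degenerate and defined over $\fq$, all osculating hyperplanes of $\xx$ pass through a common point and their intersection is a linear subspace defined over $\fq$, from which I take a point $V$ over $\fq$. Let $\cc:=\overline{\pi_V(\xx)}\subset\p^{n-1}(\fqc)$ for the projection $\pi_V$ from $V$, defined over $\fq$; then $\cc$ is a nondegenerate curve (otherwise $\xx$ lies in $\pi_V^{-1}$ of a hyperplane, contradicting nondegeneracy of $\xx$), and $\xx$ lies on the cone over $\cc$ with vertex $V$. Nonstrangeness of $\xx$ gives $V\notin T_P\xx$ for generic $P$, so $\pi_V$ is unramified there and separable, and consequently the intersection multiplicity at $P$ of any hyperplane through $V$ with $\xx$ equals the intersection multiplicity at $\pi_V(P)$ of its $\pi_V$-image with $\cc$. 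Writing the relation of \cite[Theorem 7.65]{HKT} for $\xx$ — after an $\fq$-linear change of coordinates permitted by the degeneracy of $\xx^{'}$ — as $\sum_{i<n}z_i^{p^r}x_i=0$, so that $H_P(\xx):\sum_{i<n}z_i(P)^{p^r}X_i=0$ contains $V$, this comparison of intersection multiplicities forces $\lambda_{n-1}=\varepsilon_n=p^r$, and then the uniqueness of the osculating hyperplane of $\cc$ gives $\pi_V^{-1}\big(H_{\pi_V(P)}(\cc)\big)=H_P(\xx)$, so $(z_0^{p^r}:\dots:z_{n-1}^{p^r})$ factors through $\pi_V$. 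Since $\fqc(\xx)/\fqc(\cc)$ is separable while each $z_i/z_j$ is purely inseparable over $\fqc(\cc)$, the $z_i$ can be normalized to lie in $\fqc(\cc)$, and then the relations $\sum_{i<n}z_i^{p^r}x_i=0$ and $\sum_{i<n}z_i^{p^r}x_i^{q^{m_i}}=0$ (the latter being exactly $\Phi_{q^{m_i}}(P)\in H_P(\xx)$) descend to $\cc$, yielding $H_Q(\cc):\sum_{i<n}z_i(Q)^{p^r}X_i=0$ and $\Phi_{q^{m_i}}(Q)\in H_Q(\cc)$ for all $i$; this is (b).

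The hard part will be the direction $(a)\Rightarrow(b)$: producing $V$ is routine, but establishing $\lambda_{n-1}=\varepsilon_n$ and then pushing the equation of the osculating hyperplane of $\xx$ down to $\cc$ both rely on using the nonstrangeness of $\xx$ to make $\pi_V$ unramified at a generic point, and on the uniqueness of the osculating hyperplane together with the separability of $\fqc(\xx)/\fqc(\cc)$ to promote ``$z_i^{p^r}\in\fqc(\cc)$'' to ``$z_i\in\fqc(\cc)$''. Once the defining relation of the osculating hyperplane genuinely lives on $\cc$, the remaining verifications are formal.
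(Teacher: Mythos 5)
Your proof is correct and follows the same overall skeleton as the paper's: translate ``last Frobenius order equals $\varepsilon_n$'' into $\Phi_{q^{m_i}}(P)\in H_P(\xx)$ via Proposition \ref{oschypeq} and the minimality in \eqref{fncl}, use $\varepsilon_n=p^r$ to write $H_P(\xx)$ through the relation $\sum z_i^{p^r}x_i=0$, use the degeneracy of $\xx^{'}$ over $\fq$ to normalize $z_n=0$, and then transport the relations $\sum z_i^{p^r}x_i^{q^{m_j}}=0$ between $\fq(\xx)$ and $\fq(\cc)$. The one genuine difference is that where the paper imports two external results --- Kaji's Main Theorem (for the fact that nonstrangeness plus degeneracy of $\xx^{'}$ forces $\xx$ to lie on a cone over a curve $\cc$ with $\lambda_{n-1}=\varepsilon_n$) and Garcia--Voloch's Proposition 2 (for the degeneracy of $\xx^{'}$ in the converse direction) --- you prove both directly: you observe that nonstrangeness makes the projection $\pi_V$ from the vertex separable and generically unramified, so that intersection multiplicities of hyperplanes through $V$ are preserved and $\lambda_{n-1}=\varepsilon_n$ follows because $V$ lies on every osculating hyperplane; and in the converse direction you simply exhibit the Gauss map as $(z_0^{p^r}:\cdots:z_{n-1}^{p^r}:0)$. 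This buys a self-contained argument at the cost of a little more bookkeeping (checking $e_P=1$ at a generic point and promoting $(z_i/z_j)^{p^r}\in\fqc(\cc)$ to $z_i/z_j\in\fqc(\cc)$ via separability of $\fqc(\xx)/\fqc(\cc)$, a step the paper sidesteps by working with $z_i^{p^r}$ throughout); all of these verifications are sound as you state them.
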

\begin{proof}
Let $x_0,x_1,\ldots,x_n$ be the projective coordinate functions of $\xx$. Since $\varepsilon_n=p^r$ there exist $z_0, \ldots z_n \in \fq(\xx)$ such that
\begin{equation}\label{eqnc}
z_0^{p^r}x_0+z_1^{p^r}x_1\cdots+z_n^{p^r}x_n=0.
\end{equation}
Equation \eqref{eqnc} implies that the osculating hyperplane to $\xx$ at $P \in \tilde{\xx}$ is defined by
$$
H_P(\xx):(z_0^{p^r}(P))X_0+\cdots+(z_n^{p^r}(P))X_n=0.
$$
Let us assume that (a) holds. From the fact that the last term of the $\F_{q^{m_i}}$-Frobenius order sequence equals to $\varepsilon_n$, for $i=1,\ldots,s$, together with Proposition \ref{oschypeq} and \eqref{fncl}, we have that $\Phi_{q^{m_i}}(P) \in H_P(\xx)$ for infinitely many points $P \in \tilde{\xx}$. Hence, we have the following equation in $\fq(\xx)$ for $i=1,\ldots,s$:
\begin{equation}\label{eq6}
z_0^{p^r}x_0^{q^{m_i}}+z_1^{p^r}x_1^{q^{m_i}}+\cdots+z_n^{p^r}x_n^{q^{m_i}}=0.
\end{equation} 
Suppose that $\xx^{'}$ is degenerate. Then there exists a linear proper subvariety $\p^L \subset \p^n(\fqc)$ such that $\xx^{'}\subset \p^L$ is nondegenerate. Since $\xx^{'}$ is defined over $\fq$, so is $\p^L$. Thus there exist $a_0,\ldots,a_n \in \fq$ such that $\sum\limits_{j=0}^{n}a_jz_j=0$. After a projective change of coordinates defined over $\fq$, we may assume that $z_n=0$.
 
Since $\xx$ is nonstrange, the degeneracy of $\xx^{'}$ implies, via \cite[Main Theorem]{Ka} (see also \cite[Proposition 2]{GV0}), that $\xx$ lies on a cone over the curve $\cc \subset \p^{n-1}(\fqc)$ with projective coordinate functions $x_0,x_1,\ldots,x_{n-1}$ with order sequence $(\lambda_0,\ldots,\lambda_{n-1})$ such that $\lambda_{n-1}=\varepsilon_n=p^r$ (note that the nonstrangeness of $\xx$ guarantees that at least one of the $x_i$ is a separating element for $i=0,\ldots,n-1$). Now, let $\delta=(f_0:f_1: \cdots:f_{n-1})$ be the strict Gauss map of $\cc$. From the uniqueness of the osculating hyperplane to $\xx$ at a generic point $P \in \xx$, we conclude that
$$
\frac{f_i}{f_0}=\left(\frac{z_i}{z_0} \right)^{p^r}
$$
for $i=1,\ldots,n-1$, which implies that $\delta=(z_0^{p^r}:\cdots:z_{n-1}^{p^r})$. Therefore, relation \eqref{eq6} holds in $\fqc(\cc)$, namely
\begin{equation}\label{eqcone}
z_0^{p^r}x_0^{q^{m_i}}+z_1^{p^r}x_1^{q^{m_i}}+\cdots+z_{n-1}^{p^r}x_{n-1}^{q^{m_i}}=0
\end{equation} 
for $i=1,\ldots,s$. However, \eqref{eqcone} implies that $\Phi_{q^{m_i}}(Q) \in H_Q(\cc)$ for infinitely many points $Q \in \cc$, which gives (b).

Conversely, assume that (b) holds. From \cite[Proposition 2]{GV0} we immediately have that $\xx^{'}$ is degenerate. Note that, since $\cc$ is defined over $\fq$, we may assume that it is obtained by projection of $\xx$ from the point $(0:\cdots:0:1)$. Thus, if $x_0,\ldots,x_{n-1} \in \fq(\cc)$ are the coordinate functions of $\cc$, then $x_0,\ldots,x_n$ are the coordinates of $\xx$ for some $x_n \in \fq(\xx)$. From $\lambda_{n-1}=p^r$ we have that there are $z_0,\ldots,z_{n-1} \in \fq(\cc)$, with $z_i$ separating for at least one $i$, such that $\sum\limits_{i=0}^{n-1}z_i^{p^r}x_i=0$, and the osculating hyperplane at a generic point $Q\in \cc$ is defined by $\sum\limits_{i=0}^{n-1}(z_i^{p^r}(Q))X_i=0$. Thus \eqref{eqcone} holds for for $i=1,\ldots,s$. Since \eqref{eqcone} and $\sum\limits_{i=0}^{n-1}z_i^{p^r}x_i=0$ hold in $\fq(\xx)$ as well and $\varepsilon_n=p^r$, we obtain (a).
\end{proof}

\begin{ex}
The following example pertain to Proposition \ref{nondeg}. It appears in \cite[Example 3.9]{Ar} with more details; we will reproduce it here for sake of completeness. Suppose $p>3$. Let $u,m$ be co-prime positive integers with $m >2$ and $m>u$, and consider the Borges curve defined over $\fq$ by $\ff:f(x,y)=0$, where
$$
f(x,y)=\frac{(x^{q^u}-x)(y^{q^m}-y)-(x^{q^m}-x)(y^{q^u}-y)}{(x^{q^2}-x)(y^{q}-y)-(x^{q}-x)(y^{q^2}-y)}.
$$
This curve is the only simultaneously $\F_{q^m}$- and  $\F_{q^u}$-Frobenius nonclassical curve for the morphism of lines (\cite[Theorem 1.1]{Bo2}), and its order sequence is $(0,1,q^u)$. 
Let $\xx=\psi(\tilde{\ff})$, where $\psi=(1:x:y:xy)$. 
The order sequence of $\xx$ is $(0,1,2,q^u)$ and the $\F_{q^r}$-Frobenius order sequence is $(0,1,q^u)$ for $r=u,m$. The Gauss map of $\xx$ is given by 
$$
\gamma=(x^{q^u}y^{q^m}-y^{q^u}x^{q^m}:-(y^{q^m}-y^{q^u}):x^{q^m}-x^{q^u}:0).
$$
  Thus $\gamma$ is purely inseparable, as $\fqc(\xx)=\fqc(\ff)$, and $\xx^{'}=\gamma(\xx)$ is degenerate. Note that $\xx$ lies on a cone over $\ff$, and $\{Q,\Phi_{q^u}(Q),\Phi_{q^m}(Q)\} \subset H_Q(\ff)$ for a generic point $Q \in \ff$.
\end{ex}

\begin{cor}\label{cordeg}
Let $\xx \subset \p^n(\fqc)$ be a nonstrange curve defined over $\fq$ with order sequence $(0,1,\varepsilon_2,\ldots,\varepsilon_n)$ such that $\varepsilon_n=p^r$, with $p>n>2$. Let $1 \leq m_1< \cdots < m_{n-1}$ be a sequence of integers such that $p^r<q^{m_1}$. If $\xx$ is $\F_{q^{m_i}}$-Frobenius nonclassical for $i=1,\ldots,n-1$  such that the last term of the $\F_{q^{m_i}}$-Frobenius order sequence equals to $\varepsilon_n$ and $\varepsilon_n<q^{m_j-m_{j-1}}$ for $j>1$, then $\xx^{'}$ is nondegenerate. 
\end{cor}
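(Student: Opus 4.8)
The plan is to argue by contradiction: assume $\xx$ satisfies all the hypotheses of the corollary but $\xx^{'}$ is degenerate, and derive a violation of the condition $\varepsilon_n < q^{m_j-m_{j-1}}$. So suppose $\xx^{'}$ is degenerate. Since $s=n-1$ here, Proposition \ref{nondeg} applies and tells us that $\xx$ lies on a cone over a nondegenerated curve $\cc \subset \p^{n-1}(\fqc)$ defined over $\fq$, with order sequence $(\lambda_0,\ldots,\lambda_{n-1})$ where $\lambda_{n-1}=\varepsilon_n=p^r$, and such that for a generic $Q \in \cc$ we have $\{Q, \Phi_{q^{m_1}}(Q),\ldots,\Phi_{q^{m_{n-1}}}(Q)\}\subset H_Q(\cc)$, the osculating hyperplane to $\cc$ at $Q$. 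Writing $x_0,\ldots,x_{n-1}\in\fq(\cc)$ for the coordinate functions of $\cc$ and $z_0,\ldots,z_{n-1}\in\fq(\cc)$ for functions with $\sum_i z_i^{p^r}x_i=0$ (coming from $\lambda_{n-1}=p^r$ as in the proof of Proposition \ref{nondeg}), the condition $\Phi_{q^{m_i}}(Q)\in H_Q(\cc)$ becomes the identity in $\fq(\cc)$
$$
z_0^{p^r}x_0^{q^{m_i}}+z_1^{p^r}x_1^{q^{m_i}}+\cdots+z_{n-1}^{p^r}x_{n-1}^{q^{m_i}}=0, \qquad i=1,\ldots,n-1.
$$

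Next I would exploit that these $n-1$ relations, together with $\sum_i z_i^{p^r}x_i=0$, say that each of the $n$ vectors $\phi=(x_0,\ldots,x_{n-1})$, $\Phi_{q^{m_1}}(\phi),\ldots,\Phi_{q^{m_{n-1}}}(\phi)$ in $\fqc(\cc)^n$ is orthogonal to the single nonzero vector $(z_0^{p^r},\ldots,z_{n-1}^{p^r})$; hence these $n$ vectors span a subspace of dimension at most $n-1$, i.e. the $n\times n$ matrix with these rows is singular. The idea now is to apply Lemma \ref{multi}(a) to the curve $\cc \subset \p^{n-1}(\fqc)$ with the powers $q^{m_1}<\cdots<q^{m_{n-1}}$ of $q$ (here $s=n-1$ equals the dimension, which is the boundary case of that lemma — I should check the lemma's matrix-rank statement still gives what I need, or replace it by the direct linear-algebra observation just made). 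The conclusion is that for a generic $Q$, the points $Q,\Phi_{q^{m_1}}(Q),\ldots,\Phi_{q^{m_{n-1}}}(Q)$ lie in a linear subvariety $\p^L$ of $\p^{n-1}(\fqc)$ of dimension $\le n-2$; since $\cc$ is nondegenerate this $\p^L$ is a proper subvariety, and generically it is forced to have dimension exactly $n-2$, namely $\p^L = H_Q(\cc)$ (the osculating hyperplane already contains all these points and has dimension $n-2$).

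Now comes the quantitative step, which I expect to be the main obstacle. I want to squeeze out of the relation above a contradiction with $\varepsilon_n<q^{m_j-m_{j-1}}$. Raise the relation for index $i=j-1$ to the power $q^{m_j-m_{j-1}}$ and compare with the relation for index $i=j$: both express that $(z_0^{p^r\, q^{m_j-m_{j-1}}}, \ldots)$ and $(z_0^{p^r},\ldots)$ annihilate the same configuration of Frobenius images $\Phi_{q^{m_j}}(\phi)$ against the remaining rows. More precisely, consider the intersection multiplicity at a generic $Q$ of $\cc$ with the hyperplane $H_Q^{(j)}:\sum_i (z_i(Q))^{p^r} X_i = 0$. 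On one hand this equals $\lambda_{n-1}=p^r=\varepsilon_n$ because $H_Q^{(j)}=H_Q(\cc)$ and $\cc$ is nondegenerate with that order sequence; I'd justify this by invoking Proposition \ref{ordseqdua} applied to $\cc$ with $u_i=z_i$ and $r$ as given, noting \eqref{rel2} holds since $\lambda_{n-1}$ is exactly $p^r$ and not larger. On the other hand, using relations \eqref{eqcone} for both $i=j-1$ and $i=j$ together with the fact that $\Phi_{q^{m_{j-1}}}$ and $\Phi_{q^{m_j}}$ differ by $\Phi_{q^{m_j-m_{j-1}}}$, one gets that a certain hyperplane meets $\cc$ at $Q$ with multiplicity at least $q^{m_j-m_{j-1}}$ (the local expansion of $\sum_i(z_i-z_i(Q))^{q^{m_j-m_{j-1}}\cdot \text{(something)}}x_i$ vanishes to that order), and the uniqueness of the osculating hyperplane forces this hyperplane to be $H_Q(\cc)$ again, whence $\varepsilon_n = p^r \ge q^{m_j-m_{j-1}}$ — contradicting the hypothesis $\varepsilon_n < q^{m_j-m_{j-1}}$. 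Thus $\xx^{'}$ cannot be degenerate. I would carry out the steps in this order: (1) reduce via Proposition \ref{nondeg} to the cone/osculating picture; (2) extract the divisibility/linear-dependence of the Frobenius images via Lemma \ref{multi}(a); (3) the local-expansion computation producing multiplicity $\ge q^{m_j-m_{j-1}}$ and the clash with $\varepsilon_n=p^r$; step (3) is the delicate one, where I must be careful about which power of $q$ is being compared and ensure the hyperplane in question is genuinely $H_Q(\cc)$ so that uniqueness of the osculating hyperplane can be invoked. The condition $p^r < q^{m_1}$ is what guarantees the configuration is nontrivial (the Frobenius images are not already "absorbed" into the osculating data for the wrong reason), and should be used at the start of step (2).
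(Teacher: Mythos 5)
Your step (1) (reduction via Proposition \ref{nondeg} to the cone over $\cc\subset\p^{n-1}(\fqc)$) and the linear-algebra observation in step (2) --- that the $n$ vectors $\phi,\Phi_{q^{m_1}}(\phi),\ldots,\Phi_{q^{m_{n-1}}}(\phi)$ are all annihilated by $(z_0^{p^r},\ldots,z_{n-1}^{p^r})$, so the $n\times n$ matrix they form is singular --- are exactly the paper's opening moves. The branch in which the Frobenius images span the hyperplane, so that the later assertion of Lemma \ref{multi}(a) yields $\varepsilon_n\geq q^{m_1}$ against the hypothesis $p^r<q^{m_1}$, is also fine.

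The gap is in your step (3), which you yourself flag as the delicate one. You claim that the two relations $\sum_k z_k^{p^r}x_k^{q^{m_{j-1}}}=0$ and $\sum_k z_k^{p^r}x_k^{q^{m_j}}=0$ by themselves produce a hyperplane meeting $\cc$ at a generic $Q$ with multiplicity at least $q^{m_j-m_{j-1}}$, hence $\varepsilon_n\geq q^{m_j-m_{j-1}}$. That deduction is false: the curves of Theorem \ref{main0} satisfy exactly such pairs of consecutive relations with $\varepsilon_n$ far below the gap (in Example \ref{ex2}, $\Phi_{q^2}(P)$ and $\Phi_{q^4}(P)$ both lie in $H_P(\yy)$ while $\varepsilon_3=q<q^{2}$). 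To run the local expansion of Proposition \ref{ordseqdua} you would need a relation of the shape $\sum_k v_k^{q^{m_j-m_{j-1}}}x_k=0$, and your two relations do not yield one: the coefficients $z_k^{p^r}$ are not $q^{m_{j-1}}$-th powers, so the root you need cannot be extracted. What actually closes the argument is the extra degeneracy from step (2), exploited through a rank analysis of the tails of the singular matrix: choose $s$ so that the last $s$ rows have rank $s$ while the last $s+1$ rows still have rank $s$. If $s=n-1$, Lemma \ref{multi}(a) gives $\varepsilon_n\geq q^{m_1}$; if $s<n-1$, taking $q^{m_{n-s-1}}$-th roots of the entries of those $s+1$ rows puts you back in the setting of Lemma \ref{multi}(a) with smallest power $q^{m_{n-s}-m_{n-s-1}}$, whence $\varepsilon_n\geq q^{m_{n-s}-m_{n-s-1}}$. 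Which gap bounds $\varepsilon_n$ is dictated by where the rank drop occurs, not by a $j$ of your choosing; the hypotheses of the corollary are precisely what rule out every branch of this dichotomy.
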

\begin{proof}
If $\xx^{'}$ is degenerate, then by Proposition \ref{nondeg} and Lemma \ref{multi}(a), for $h_0=0$, we have that $\det(x_j^{p^{h_i}})_{0 \leq i,j,\leq n-1}=0$. Let $s$ such that both  $s$ and $s+1$ last lines of $(x_j^{p^{h_i}})_{0 \leq i,j,\leq n-1}$ have rank $s$. Then again by Lemma \ref{multi}(a), we have that if $s=n-1$, then $\varepsilon_n \geq q^{m_1}$. If $s<n-1$, then $\varepsilon_n \geq q^{m_{n-s}-m_{n-s-1}}$.
\end{proof}

The main results of the section are the following ones. The former result shows, under certain conditions, how the multi-Frobenius nonclassicality of a nonplane curve and some geometric properties of its strict dual interact to each other. We remark that one of the implications of Theorem \ref{main-1}, namely (1) $\Rightarrow$ (2), is a generalization of the first statement of \cite[Proposition 7]{HV}.

\begin{thm}\label{main-1}
Let $\xx \subset \p^n(\fqc)$ with $n>2$ be a nonstrange curve defined over $\fq$ by the coordinate functions $x_0,\ldots,x_n$ with order sequence $(0,1,\varepsilon_2,\ldots,\varepsilon_n)$, where $p>n$. Let $1 \leq m_1< \cdots < m_{n-1}$ be a sequence of integers and let $h_i>0$ be such that $q^{m_i}=p^{h_i}$ for $i=1\ldots,n-1$. Moreover, denote by $\gamma^{'}:\xx^{'} \lra \xx^{''}$ the Gauss map of $\xx^{'}$. Assume that:
\begin{itemize}
\item The Gauss map $\gamma:\xx \lra \xx^{'}$ is purely inseparable and $\xx^{'}$ is nondegenerate.
\item $\varepsilon_n=p^r<q^{m_1}$. In particular, there are $z_0,\ldots,z_n \in \fq(\xx)$ such that $z_0^{p^r}x_0+\cdots+z_0^{p^r}x_0=0$.
\item For a generic point $P \in \xx$, $\{\Phi_{q^{m_1}}(P),\ldots,\Phi_{q^{m_{n-1}}}(P)\}$ is not in a linear projective subvariety of dimension $n-3$.
\end{itemize}
Then the following conditions are equivalent:
\begin{itemize}
\item[(1)] $\xx$ is $\F_{q^{m_i}}$-Frobenius nonclassical for $i=1,\ldots,n-1$ such that the last term of the $\F_{q^{m_i}}$-Frobenius order sequence equals to $\varepsilon_n$ and
\begin{equation}\label{hipder0}
\left(D_\zeta^{(1)} x_0\right)^{p^{h_i-r}}z_0+\left(D_\zeta^{(1)} x_1\right)^{p^{h_i-r}}z_1+\cdots+\left(D_\zeta^{(1)} x_n\right)^{p^{h_i-r}}z_n\neq0
\end{equation}
for $i=1,\ldots,n-1$, where $\zeta$ is a separating element of $\fqc(\xx)$.
\item[(2)] $\xx^{'}$ has order sequence $(\varepsilon_0^{'},\ldots,\varepsilon_n^{'})=(0,1,p^{h_1-r},\ldots,p^{h_{n-1}-r})$, $\xx^{''}=\xx$ with $\gamma^{'}\circ \gamma =\Phi_{p^{h_{n-1}}}$ and $\xx^{'}$ is $\F_{q^{m_{n-1}-m_j}}$-Frobenius nonclassical with the last term of the $\F_{q^{m_{n-1}-m_j}}$-Frobenius order sequence of $\xx^{'}$ equals to $p^{h_{n-1}-r}$ for $j=1,\ldots,n-2$.
\end{itemize}
\end{thm}
\begin{proof}
Assume that (1) holds. Using the same notation of Proposition \ref{nondeg}, we have that \eqref{eq6} hold for $i=1,\ldots,n-1$, which implies
\begin{equation}\label{equ7}
z_0x_0^{p^{h_i-r}}+z_1x_1^{p^{h_i-r}}+\cdots+z_nx_n^{p^{h_i-r}}=0. 
\end{equation}
 Since $\gamma$ is purely inseparable, the function field of $\xx$ is given by $\fq(z_1/z_0,\cdots,z_n/z_0)=\fq(\xx^{'})$, and $z_j/z_0$ is separating for at least one $j$. In particular, we have $x_0,\ldots,x_n \in \fq(\xx^{'})$ and, since \eqref{hipder0} holds for $i=1,\ldots,n-1$, then Proposition \ref{ordseqdua} gives that $p^{h_{i}-r}$ are orders of $\xx^{'}$ for $i=1,\ldots,n-1$, as $\xx^{'}$ is nondegenerate. From the fact that $\varepsilon_0=0$ and $\varepsilon_1=1$, we conclude that $(\varepsilon_0^{'},\ldots,\varepsilon_n^{'})=(0,1,p^{h_1-r},\ldots,p^{h_{n-1}-r})$.

By Proposition \ref{ordseqdua}, we have that the osculating hyperplane at a generic point $Q \in \tilde{\xx^{'}}$ is
$$
H_{Q}^{h_{n-1}-r}: (x_0(Q))^{p^{h_{n-1}-r}}Z_0+(x_1(Q))^{p^{h_{n-1}-r}}Z_1+\cdots+(x_n(Q))^{p^{h_{n-1}-r}}Z_n=0.
$$
Hence the Gauss map $\gamma^{'}:\xx^{'} \lra \xx^{''}$ is given by $\gamma^{'}=(x_0^{p^{h_{n-1}-r}}:x_1^{p^{h_{n-1}-r}}:\cdots:x_n^{p^{h_{n-1}-r}})$. Therefore, $\xx^{''}=\xx$ and $\gamma^{'}\circ \gamma =\Phi_{p^{h_{n-1}}}$. Finally, raising \eqref{equ7} to the power $p^{h_{n-1}-h_i}$ for $i=1,\ldots,n-2$, we obtain 
$$
z_0^{p^{h_{n-1}-h_i}}x_0^{p^{h_{n-1}-r}}+z_1^{p^{h_{n-1}-h_i}}x_1^{p^{h_{n-1}-r}}+\cdots+z_n^{p^{h_{n-1}-h_i}}x_n^{p^{h_{n-1}-r}}=0.
$$
Since $H_{Q}^{h_{n-1}-r}$ is the osculating hyperplane of $\xx^{'}$ at a generic point $Q$, we have that $\Phi_{p^{h_{n-1}-h_i}}(Q) \in H_Q^{h_{n-1}-r}$, which proves the last assertion of (2).

Conversely, assume that (2) holds. From \cite[Theorem 3]{GV0} we immediately obtain that $\xx$ is $\F_{q^{m_{n-1}}}$-Frobenius nonclassical such that the last term of the $\F_{q^{m_{n-1}}}$-Frobenius order sequence equals $\varepsilon_n$. Since $\varepsilon_n=p^r$, equation \eqref{eqnc} holds, where $\gamma=(z_0^{p^r}: \cdots:z_n^{p^r})$, and the $\F_{q^{m_{n-1}}}$-Frobenius nonclassicality as we just obtained gives
\begin{equation}\label{volta}
z_0x_0^{p^{h_{n-1}-r}}+z_1x_1^{p^{h_{n-1}-r}}+\cdots+z_nx_n^{p^{h_{n-1}-r}}=0.
\end{equation} 
Since $\varepsilon^{'}_n=p^{h_{n-1}-r}$, we conclude that the osculating hyperplane at a generic point $Q \in \tilde{\xx^{'}}$ is
$$
H_{Q}^{h_{n-1}-r}: (x_0(Q))^{p^{h_{n-1}-r}}Z_0+(x_1(Q))^{p^{h_{n-1}-r}}Z_1+\cdots+(x_n(Q))^{p^{h_{n-1}-r}}Z_n=0
$$
(here we once again use that $\gamma$ is purely inseparable). Thus from the hypothesis on the $\F_{q^{m_{n-1}-m_j}}$-Frobenius nonclassicality of $\xx^{'}$, we have that
$$
z_0^{p^{h_{n-1}-h_j}}x_0^{p^{h_{n-1}-r}}+z_1^{p^{h_{n-1}-h_j}}x_1^{p^{h_{n-1}-r}}+\cdots+z_n^{p^{h_{n-1}-h_j}}x_n^{p^{h_{n-1}-r}}=0
$$
for $j=1,\ldots,n-2$, which gives
\begin{equation}\label{volta2}
z_0x_0^{p^{h_{j}-r}}+z_1x_1^{p^{h_{j}-r}}+\cdots+z_nx_n^{p^{h_{j}-r}}=0.
\end{equation}
for all $j=1,\ldots,n-2$. Since equality above holds for $j=n-1$, we obtain that \eqref{eq6} holds for $i=1,\ldots,n-1$, and then $\xx$ is $\F_{q^{m_i}}$-Frobenius nonclassical for $i=1,\ldots,n-1$ such that the last term of the $\F_{q^{m_i}}$-Frobenius order sequence equals to $\varepsilon_n$. Now let $\mathcal{D}_2(Q)$ denote the $\fqc$-vector space of all hyperplanes in $(\p^n(\fqc))^{'}$ that intersect $\xx^{'}$ at $Q$ with multiplicity at least $2$. Then $H_{Q}^{h_{j}-r} \in \mathcal{D}_2(Q)$. Since $\xx^{'}$ is nondegenerate, it follows from \cite[Section 1]{SV} that $\dim_{\fqc}(\mathcal{D}_2(Q))=n-1$. We claim that $\{H_Q^{h_j-r} \ | \ j=1,\ldots,n-1\}$ is a basis of $\mathcal{D}_2(Q)$. Indeed, since $\{P,\Phi_{q^{m_2-m_1}}(P),\ldots,\Phi_{q^{m_{n-1}-m_1}}(P)\}$ is not in a linear projective subvariety of dimension $n-3$, we have by Lemma \ref{multi} that 
$$
\rank\left(
\begin{array}{cccc}
x_0& x_1 & \cdots & x_{n}\\
x_0^{p^{h_{2}-h_1}} & x_1^{p^{h_2-h_1}} & \cdots & x_{n}^{p^{h_2-h_1}}\\
\cdots & \cdots&\cdots&\cdots\\
x_0^{p^{h_{{n-1}}-h_1}} & x_1^{p^{h_{n-1}-h_1}} & \cdots & x_{n}^{p^{h_{n-1}-h_1}}
\end{array} \right)=n-1,
$$
which gives that, for a generic $Q \in \xx^{'}$, 
$$
\rank\left(
\begin{array}{cccc}
(x_0(Q))^{p^{h_{1}-r}} & (x_1(Q))^{p^{h_1-r}} & \cdots & (x_n(Q))^{p^{h_1-r}}\\
(x_0(Q))^{p^{h_{2}-r}} & (x_1(Q))^{p^{h_2-r}} & \cdots & (x_n(Q))^{p^{h_2-r}}\\
\cdots & \cdots&\cdots&\cdots\\
(x_0(Q))^{p^{h_{{n-1}}-r}} & (x_1(Q))^{p^{h_{n-1}-r}} & \cdots & (x_n(Q))^{p^{h_{n-1}-r}}
\end{array} \right)=n-1,
$$
and the claim follows. From this, we obtain that $I(Q,\xx^{'} \cap H_Q^{h_1-r})=p^{h_1-r}$, since otherwise we would have that $H_Q^{h_j-r} \in \mathcal{D}_{p^{h_2-r}}(Q)$ for $j=1,\ldots,n-1$, which is a contradiction since $\dim_{\fqc}(\mathcal{D}_{p^{h_2-r}}(Q))=n-2$. Proceeding in this way, we obtain that $I(Q,\xx^{'} \cap H_Q^{h_j-r})=p^{h_j-r}$ for $j=2,\ldots,n-2$, and the result follows from Proposition \ref{ordseqdua}.
\end{proof}

\begin{rem}\label{obs1}
\begin{enumerate}
\item Note that the hypothesis ``$\{\Phi_{q^{m_1}}(P),\ldots,\Phi_{q^{m_{n-1}}}(P)\}$ is not in a linear projective subvariety of dimension $n-3$ for a generic $P \in \xx$'' of Theorem \ref{main-1} is only used to prove (2) $\Rightarrow$ (1).
\item In Theorem \ref{main-1}, the dual $\xx^{'}$ of $\xx$ is assumed to be nondegenerate. In view of the characterization given in Proposition \ref{nondeg}, we know exactly which type of curves do not satisfy such condition. In particular, if we assume that $\varepsilon_n<\min\{q^{m_1},q^{m_j-m_{j-1}}\}$ for $j>1$, we obtain that $\xx^{'}$ is nondegenerate by Corollary \ref{cordeg}.
\item The strict duals of the curves satisfying the hypotheses of Theorem \ref{main-1} and (1) are nonreflexive.
\end{enumerate}
\end{rem}

We now state one of the implications of Theorem \ref{main-1} under weaker, but less technical conditions. However, note that in Theorem \ref{main0} the dual curve $\xx^{'}$ of $\xx$ is not assumed to be nondegenerate.

\begin{thm}\label{main0}
Let $\xx \subset \p^n(\fqc)$ be a nonstrange curve defined over $\fq$ with order sequence $(0,1,2,\ldots,n-1,\varepsilon_n)$ such that $\varepsilon_n>n$, with $p>n$. Let $1 \leq m_1< \cdots < m_{n-1}$, be a sequence of integers such that $\varepsilon_n<q^{m_1}$. Assume that $\xx$ is $\F_{q^{m_i}}$-Frobenius nonclassical for $i=1,\ldots,n-1$, the Gauss map $\gamma:\xx \lra \xx^{'}$ is purely inseparable and $q^{m_j-m_{j-1}}>\varepsilon_n$ for $j>1$. Then $\varepsilon_n=p^r$ for some $r>0$, the order sequence of the strict dual $\xx^{'}$ is $(\varepsilon_0^{'},\ldots,\varepsilon_n^{'})=(0,1,p^{h_1-r},\ldots,p^{h_{n-1}-r})$ and $\xx=\xx^{''}$ with $\gamma^{'} \circ \gamma =\Phi_{p^{h_{n-1}}}$, where  $q^{m_i}=p^{h_i}$, $i=1\ldots,n-1$. Moreover, if $n>2$, the last term of the $\F_{q^{m_{n-1}-m_j}}$-Frobenius order sequence of $\xx^{'}$ equals to $p^{h_{n-1}-r}$ for $j=1,\ldots,n-2$.
\end{thm}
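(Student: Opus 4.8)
The plan is to deduce the statement from the implication (1)$\Rightarrow$(2) of Theorem~\ref{main-1}, checking that the present, more concrete hypotheses supply (or make superfluous) each ingredient used there. First I would show $\varepsilon_n=p^r$: since the order sequence is $(0,1,\dots,n-1,\varepsilon_n)$ with $\varepsilon_n>n>n-1$, the $p$-adic criterion for order sequences (\cite[Section~1]{SV}) forces it — if $p^r\le\varepsilon_n<p^{r+1}$ then $\binom{\varepsilon_n}{p^r}\not\equiv0\pmod p$ by Lucas' theorem, so $p^r$ occurs among the orders, and since $p^r\ge p>n-1$ the only possibility is $p^r=\varepsilon_n$; here $r\ge1$ as $\varepsilon_n>n\ge2$. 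Then, exactly as in the proof of Proposition~\ref{nondeg}, $\varepsilon_n=p^r$ together with \cite[Theorem~7.65]{HKT} gives $z_0,\dots,z_n\in\fq(\xx)$ with $\sum_kz_k^{p^r}x_k=0$, $\gamma=(z_0^{p^r}:\cdots:z_n^{p^r})$, and the osculating hyperplane at a generic $P$ equal to $\sum_kz_k^{p^r}(P)X_k=0$. Because $\xx$ is $\F_{q^{m_i}}$-Frobenius nonclassical with order sequence $(0,1,\dots,n-1,\varepsilon_n)$, its $\F_{q^{m_i}}$-Frobenius order sequence equals $\{\varepsilon_0,\dots,\varepsilon_n\}\setminus\{\varepsilon_{I_i}\}$ for some $I_i\in\{1,\dots,n\}$, and since it is not $(0,\dots,n-1)$ we have $I_i<n$, so $\varepsilon_n$ is its last term; thus the ``last Frobenius order $=\varepsilon_n$'' hypothesis of Theorem~\ref{main-1} comes for free, and, by Proposition~\ref{oschypeq} and \eqref{fncl}, $\Phi_{q^{m_i}}(P)\in H_P(\xx)$ for infinitely many $P$. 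Writing $q^{m_i}=p^{h_i}$ with $h_i>r$ (since $q^{m_i}\ge q^{m_1}>\varepsilon_n$) and extracting $p^r$-th roots from $\sum_kz_k^{p^r}x_k^{q^{m_i}}=0$ yields the relations \eqref{equ7}: $\sum_kz_kx_k^{p^{h_i-r}}=0$, $i=1,\dots,n-1$.

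Next I would establish that $\xx^{'}$ is nondegenerate. For $n=2$ this is automatic ($\xx$ is non-strange and nonclassical, hence not a line). For $n>2$, Corollary~\ref{cordeg} applies: all its hypotheses now hold — $\xx$ non-strange, order sequence $(0,1,\varepsilon_2,\dots,\varepsilon_n)$, $\varepsilon_n=p^r$, $p>n>2$, $p^r<q^{m_1}$, $\xx$ is $\F_{q^{m_i}}$-Frobenius nonclassical with last Frobenius order $\varepsilon_n$, and $\varepsilon_n<q^{m_j-m_{j-1}}$ — so $\xx^{'}$ is nondegenerate. Since $p>n\ge2$ the curve $\xx^{'}$ is also non-strange, hence $\varepsilon_0^{'}=0$, $\varepsilon_1^{'}=1$, and $\dim_{\fqc}\mathcal{D}_2(Q)=n-1$ for generic $Q$, where $\mathcal{D}_\ell(Q)$ denotes the space of hyperplanes meeting $\xx^{'}$ at $Q$ with multiplicity $\ge\ell$. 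As $\gamma$ is purely inseparable, $\fqc(\xx^{'})=\fqc(\xx)$, so $x_0,\dots,x_n\in\fq(\xx^{'})$ and the orders of $\xx^{'}$ can be computed from \eqref{equ7} via Proposition~\ref{ordseqdua}: the hyperplane $H_Q^{h_i-r}:\sum_kx_k^{p^{h_i-r}}(Q)Z_k=0$ satisfies $I(Q,\xx^{'}\cap H_Q^{h_i-r})\ge p^{h_i-r}$, so $H_Q^{h_i-r}\in\mathcal{D}_2(Q)$ (note $p^{h_i-r}\ge p\ge3$).

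The core step is to promote these inequalities to equalities — which is exactly the non-vanishing \eqref{hipder0}. I would first show the $n-1$ hyperplanes $H_Q^{h_i-r}$ are linearly independent for generic $Q$: otherwise the matrix $(x_k^{p^{h_i-r}})_{1\le i\le n-1,\,0\le k\le n}$ — whose rows are the successive Frobenii $y,y^{q^{m_2-m_1}},\dots,y^{q^{m_{n-1}-m_1}}$ of $y=x^{p^{h_1-r}}$, the coordinate functions of $\Phi_{p^{h_1-r}}(\xx)$, which is again defined over $\fq$ (a Frobenius twist) with the same order sequence $(0,1,\dots,n-1,\varepsilon_n)$ as $\xx$ — would have rank $<n-1$, and then Lemma~\ref{multi}(a), applied exactly as in the proof of Corollary~\ref{cordeg}, would force $\varepsilon_n\ge q^{m_j-m_{j-1}}$ for some $j>1$, contradicting the hypothesis. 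Hence the $H_Q^{h_i-r}$ are a basis of $\mathcal{D}_2(Q)$, and the bookkeeping with the spaces $\mathcal{D}_\ell(Q)$ from the last paragraph of the proof of Theorem~\ref{main-1} gives $I(Q,\xx^{'}\cap H_Q^{h_i-r})=p^{h_i-r}$ for each $i$; by Proposition~\ref{ordseqdua} each $p^{h_i-r}$ is then an order of $\xx^{'}$, and since $\varepsilon_0^{'}=0,\varepsilon_1^{'}=1$ this accounts for all $n+1$ of them, so $(\varepsilon_0^{'},\dots,\varepsilon_n^{'})=(0,1,p^{h_1-r},\dots,p^{h_{n-1}-r})$. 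In particular $\varepsilon_n^{'}=p^{h_{n-1}-r}$, and applying \cite[Theorem~7.65]{HKT} to $\xx^{'}$ with \eqref{equ7} for $i=n-1$ gives $\gamma^{'}=(x_0^{p^{h_{n-1}-r}}:\cdots:x_n^{p^{h_{n-1}-r}})$, so $\xx^{''}=\xx$ and $\gamma^{'}\circ\gamma=\Phi_{p^{h_{n-1}}}$ (recall $p^{h_{n-1}}=q^{m_{n-1}}$, so this Frobenius carries $\xx$ to itself). Finally, for $n>2$, raising \eqref{equ7} with $i=j$ to the $p^{h_{n-1}-h_j}$-th power shows $\Phi_{q^{m_{n-1}-m_j}}(Q)\in H_Q(\xx^{'})=H_Q^{h_{n-1}-r}$ for generic $Q$, which (by the argument of the first paragraph, now applied to $\xx^{'}$) forces the last term of the $\F_{q^{m_{n-1}-m_j}}$-Frobenius order sequence of $\xx^{'}$ to be $p^{h_{n-1}-r}$ for $j=1,\dots,n-2$.

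The step I expect to be the main obstacle is the one in the previous paragraph: verifying \eqref{hipder0}. Here the spacing assumption $q^{m_j-m_{j-1}}>\varepsilon_n$ and the stronger form $(0,1,\dots,n-1,\varepsilon_n)$ of the order sequence must do the work played in Theorem~\ref{main-1} by the general-position hypothesis on $\{P,\Phi_{q^{m_i}}(P)\}$; concretely, one needs the linear independence of the $H_Q^{h_i-r}$ and then the dimension count for the $\mathcal{D}_\ell(Q)$ that converts this into the exact intersection multiplicities $p^{h_i-r}$. Everything else is a direct transcription of the corresponding parts of the proof of Theorem~\ref{main-1}.
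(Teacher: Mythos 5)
Your outline matches the paper's proof step for step in the first two paragraphs ($\varepsilon_n=p^r$ via the $p$-adic criterion, the relations \eqref{equ7}, nondegeneracy of $\xx^{'}$ via Corollary~\ref{cordeg}) and in the endgame (Gauss map of $\xx^{'}$, $\xx^{''}=\xx$, the Frobenius statement for $\xx^{'}$). But the step you yourself flag as the main obstacle --- establishing \eqref{hipder0} --- is where your proposal has a genuine gap. The ``bookkeeping with the spaces $\mathcal{D}_\ell(Q)$'' from the last paragraph of the proof of Theorem~\ref{main-1} does not transfer: there, the order sequence of $\xx^{'}$ is \emph{part of hypothesis (2)}, so an intersection multiplicity exceeding $p^{h_1-r}$ is forced to jump to the next known order $p^{h_2-r}$, putting all $n-1$ independent hyperplanes inside $\mathcal{D}_{p^{h_2-r}}(Q)$ of dimension $n-2$. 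In Theorem~\ref{main0} the order sequence of $\xx^{'}$ is the \emph{conclusion}; from linear independence of the $H_Q^{h_i-r}$ together with $I(Q,\xx^{'}\cap H_Q^{h_i-r})\geq p^{h_i-r}$ you only get the lower bounds $\varepsilon_{i+1}^{'}\geq p^{h_i-r}$, and nothing in your argument excludes, say, $I(Q,\xx^{'}\cap H_Q^{h_1-r})=p^{h_1-r}+1$ with $\varepsilon_2^{'}=p^{h_1-r}+1$ --- a scenario fully compatible with the basis property and all the dimension counts you invoke. So the dimension count cannot by itself convert the inequalities into equalities.

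The paper closes this gap by a different mechanism: assuming \eqref{hipder} fails for some $k$, it adjoins the row of first Hasse derivatives to the Frobenius matrix, deduces that $M_{n}^{h_k-h_1}(0,h_2-h_1,\ldots,h_{n-1}-h_1)$ has rank exactly $n-1$, and then runs a descent using Lemma~\ref{multi}(b): either one lands on a rank condition forcing $\varepsilon_n\geq q^{m_j-m_s}$ (contradicting the spacing hypothesis), or one shows that the matrix $\bigl(D_\zeta^{(j)}x_i\bigr)_{0\le i\le n,\,0\le j\le n-1}$ has rank at most $n-1$, contradicting $\varepsilon_{n-1}=n-1$ (this is where $p>n$ and the classical initial segment $(0,1,\ldots,n-1)$ of the order sequence are actually consumed). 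Tellingly, your proposed argument for the key step never uses the hypothesis $\varepsilon_i=i$ for $i\le n-1$ at all, whereas the paper's proof uses it precisely there; that is a reliable sign the replacement cannot work. You would need to supply an argument of this derivative-plus-rank type (or some other genuine upper bound on $\varepsilon_n^{'}$) to complete the proof.
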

\begin{proof}
The fact that $\varepsilon_n$ is a power of $p$ follows from $p$-adic criteria, see \cite[Corollary 1.9]{SV}. Since $\xx$ is $\F_{q^{m_i}}$-Frobenius nonclassical for $i=1,\ldots,n-1$ and the Frobenius order sequence is always a subsequence of the order sequence, we conclude that the $\F_{q^{m_i}}$-Frobenius order sequence of $\xx$ is $(0,1,\ldots,n-2,\varepsilon_n)$ for all $i \in \{1,\ldots,n-1\}$. Again, with notation of Proposition \ref{nondeg}, we have that \eqref{eq6} holds for $i=1,\ldots,n-1$, which implies
\begin{equation}\label{eq7}
z_0x_0^{p^{h_i-r}}+z_1x_1^{p^{h_i-r}}+\cdots+z_nx_n^{p^{h_i-r}}=0. 
\end{equation}
Since $q^{m_j-m_{j-1}}>\varepsilon_n$, we have from Corollary \ref{cordeg} that $\xx^{'}$ is nondegenerate. We aim to show that this also implies that 
\begin{equation}\label{hipder}
\left(D_\zeta^{(1)} x_0\right)^{p^{h_k-r}}z_0+\left(D_\zeta^{(1)} x_1\right)^{p^{h_k-r}}z_1+\cdots+\left(D_\zeta^{(1)} x_n\right)^{p^{h_k-r}}z_n\neq0
\end{equation}
for $k=1,\ldots,n-1$. Suppose on contrary that relation above does not hold for some such $k$. Recall that $\gamma$ is purely inseparable, hence $\fq(\xx)=\fq(z_1/z_0,\cdots,z_n/z_0)=\fq(\xx^{'})$, and $z_j/z_0$ is separating for at least one $j$. In particular, the equations \eqref{eq7} and 
$$
\left(D_\zeta^{(1)} x_0\right)^{p^{h_k-r}}z_0+\left(D_\zeta^{(1)} x_1\right)^{p^{h_k-r}}z_1+\cdots+\left(D_\zeta^{(1)} x_n\right)^{p^{h_k-r}}z_n=0
$$
hold in $\fq(\xx^{'})$, that is, $x_0,\ldots,x_n \in \fq(\xx^{'})$. Thus, for a generic point $Q \in \tilde{\xx^{'}}$, the hyperplane
$$
V_Q:\left(\left(D_\zeta^{(1)} x_0\right)(Q)\right)^{p^{h_k-r}}Z_0+\left(\left(D_\zeta^{(1)} x_1\right)(Q)\right)^{p^{h_k-r}}Z_1+\cdots+\left(\left(D_\zeta^{(1)} x_n\right)(Q)\right)^{p^{h_k-r}}Z_n=0
$$
is such that $I(Q,\xx^{'}\cap V_Q) \geq p^{h_k-r}$. In the same way, $I(Q,\xx^{'}\cap H_Q^{h_i-r}) \geq p^{h_i-r}$ for $i=1,\ldots,n-1$, where
$$
H_Q^{h_i-r}:(x_0(Q))^{p^{h_i-r}}Z_0+(x_1(Q))^{p^{h_i-r}}Z_1+\cdots+(x_n(Q))^{p^{h_i-r}}Z_n=0.
$$
Now, denote by $\mathcal{D}_{p^{h_1-r}}(Q)$ the $\fqc$-vector space of all hyperplanes in $(\p^{n}(\fqc))^{'}$ that intersect $\xx^{'}$ at $Q$ with multiplicity at least $p^{h_1-r}$. Since $\xx^{'}$ is nondegenerate, it follows from \cite[Section 1]{SV} that $\dim_{\fqc}\mathcal{D}_{p^{h_1-r}}(Q) \leq n-1$. Thus, we have that
$$\left(
\begin{array}{cccc}
(x_0(Q))^{p^{h_{1}-r}} & (x_1(Q))^{p^{h_1-r}} & \cdots & (x_n(Q))^{p^{h_1-r}}\\
(x_0(Q))^{p^{h_{2}-r}} & (x_1(Q))^{p^{h_2-r}} & \cdots & (x_n(Q))^{p^{h_2-r}}\\
\cdots & \cdots&\cdots&\cdots\\
(x_0(Q))^{p^{h_{{n-1}}-r}} & (x_1(Q))^{p^{h_{n-1}-r}} & \cdots & (x_n(Q))^{p^{h_{n-1}-r}}\\
\left(\left(D_\zeta^{(1)} x_0\right)(Q)\right)^{p^{h_k-r}}&\left(\left(D_\zeta^{(1)} x_1\right)(Q)\right)^{p^{h_k-r}}&\cdots&\left(\left(D_\zeta^{(1)} x_n\right)(Q)\right)^{p^{h_k-r}}
\end{array} \right)
$$
has rank $ \leq n-1$. In particular, since $Q \in \xx^{'}$ is generic, we obtain that $\rank(M_{n}^{h_k-h_1}(0,h_2-h_1,\ldots,h_{n-1}-h_1))\leq n-1$, where $M_{n}^{h_k-h_1}(0,h_2-h_1,\ldots,h_{n-1}-h_1):=$
$$\left(
\begin{array}{cccc}
x_0& x_1 & \cdots & x_{n}\\
x_0^{p^{h_{2}-h_1}} & x_1^{p^{h_2-h_1}} & \cdots & x_{n}^{p^{h_2-h_1}}\\
\cdots & \cdots&\cdots&\cdots\\
x_0^{p^{h_{{n-1}}-h_1}} & x_1^{p^{h_{n-1}-h_1}} & \cdots & x_{n}^{p^{h_{n-1}-h_1}}\\
\left(D_\zeta^{(1)} x_0\right)^{p^{h_k-h_1}}&\left(D_\zeta^{(1)} x_1\right)^{p^{h_k-h_1}}&\cdots&\left(D_\zeta^{(1)} x_{n}\right)^{p^{h_k-h_1}}
\end{array} \right).
$$
As a matter of fact, $\rank(M_{n}^{h_k-h_1}(0,h_2-h_1,\ldots,h_{n-1}-h_1))= n-1$. Indeed, if the $n-1$ first lines of such matrix were linearly dependent over $\fq(\xx)$, then we would conclude, arguing as in the proof of Corollary \ref{cordeg}, that $\varepsilon_n>q^{m_j-m_{j-1}}$ for some $j>1$. First, assume that $k=1$, i.e., $\rank(M_{n}^{0}(0,h_2-h_1,\ldots,h_{n-1}-h_1))= n-1$.
Then we conclude from Lemma \ref{multi}(b) that $\nu_{n-1}\geq p$, where $(\nu_0,\ldots,\nu_{n-1})$ is the $\F_{p^{h_j-h_1}}$-Frobenius order sequence of $\xx$, with $j \in \{2,\ldots,n-1\}$. Moreover, the same Lemma \ref{multi}(b) gives that
	$$\rank\left(
\begin{array}{cccc}
x_0^{p^{h_{{n-1}}-h_1}} & x_1^{p^{h_{n-1}-h_1}} & \cdots & x_n^{p^{h_{n-1}-h_1}}\\
\cdots & \cdots&\cdots&\cdots\\
x_0^{p^{h_{2}-h_1}} & x_1^{p^{h_2-h_1}} & \cdots & x_n^{p^{h_2-h_1}}\\
x_0& x_1 & \cdots & x_n\\
D_\zeta^{(j)} x_0&D_\zeta^{(j)} x_1&\cdots&D_\zeta^{(j)} x_n
\end{array} \right) = n-1
$$
for $j=1,\ldots,p-1$. Since $p>n$, this implies that
$$
\rank\left(D_\zeta^{(j)}x_i\right)_{\substack{0 \leq i \leq n \\ 0 \leq j \leq n-1}} \leq n-1,
$$
which implies that $\varepsilon_{n-1}>n-1$, a contradiction. Now, suppose that $k>1$. If $\rank(M_{n}^{h_k-h_1}(h_2-h_1,\ldots,h_{n-1}-h_1))= n-1$, then arguing as in the proof of Lemma \ref{multi}(a), we obtain that $\varepsilon_n \geq p^{h_2-h_1}=q^{m_2-m_1}$, which is again a contradiction. If 
$\rank(M_{n}^{h_k-h_1}(h_2-h_1,\ldots,h_{n-1}-h_1))< n-1$, then $\rank(M_{n}^{h_k-h_2}(0,h_3-h_2,\ldots,h_{n-1}-h_2))= n-2$. Thus, if $k=2$, arguing as in the case $k=1$ we obtain $\varepsilon_{n-2}>n-2$, arriving again at a contradiction. If $k>2$ and $\rank(M_{n}^{h_k-h_2}(h_3-h_2,\ldots,h_{n-1}-h_2))= n-2$, we obtain $\varepsilon_n \geq q^{m_3-m_2}$. If such rank is $<n-2$, we may repeat this process as many times as necessary, obtaining that either in $\varepsilon_n \geq q^{m_j-m_s}$ with $j>s$ or $\varepsilon_{j}>j$ for a $j<n$. In both cases, we arrive at a contradiction. Hence \eqref{hipder} holds for $k=1,\ldots,n-1$. Therefore, the order sequence of $\xx^{'}$ is $(0,1,p^{h_1-r},\ldots,p^{h_{n-1}-r})$ (recall that we always have $\varepsilon_1=1$).

The remaining assertions follows exactly as in the proof of Theorem \ref{main-1}.
	
\end{proof}

\begin{ex}\label{ex2}
Consider the geometrically irreducible plane curve $\cc$ defined over $\fq$ by the affine equation
\begin{equation}\label{eq9}
w^{q^2}-w=x^{q^3+1}-x^{q^2+q}.
\end{equation}
Note that $x$ is a separating element of $\fq(\cc)=\fq(x,w)$. Defining $y:=w-x^{q+1}$, we obtain that $\fq(x,y)=\fq(x,w)$ and
\begin{equation}\label{eq10}
y^{q^2}-y+(x^{q^2}-x)(x^{q^3}+x^q)=0,
\end{equation}
which is equivalent to
\begin{equation}\label{eq11}
(y-y^{q^4})x^{q^3}+(x-x^{q^4})x^{2q^3}=(y-y^{q^2})x^{q}+(x-x^{q^2})x^{2q}+(y-y^{q^2})^{q^2}x^{q^3}+(x-x^{q^2})^{q^2}x^{2q^3}.
\end{equation}
Now, let $z$ be an element in an extension of $\fq(\cc)$ (eventually in $\fq(\cc)$) such that
\begin{equation}\label{eq12}
z-z^{q^2}+(y-y^{q^2})x^q+(x-x^{q^2})x^{2q}=0
\end{equation}
and denote by $\ff$ the curve defined in $\p^3(\fqc)$ over $\fq$ by the coordinate functions $1,x,y,z$. Via \eqref{eq11}, we obtain
\begin{equation}\label{eq13}
z-z^{q^4}+(y-y^{q^4})x^{q^3}+(x-x^{q^4})x^{2q^3}=0.
\end{equation}
Now, let $\yy=\psi(\tilde{\ff}) \subset \p^3(\fqc)$, where $\psi=(1:x:x^2:z^q+y^qx+x^{q+2})$. The order sequence of $\yy$ is $(0,1,2,q)$. Moreover, \eqref{eq12} and \eqref{eq13} imply that $\yy$ is $\F_{q^2}$ and $\F_{q^4}$-Frobenius nonclassical with $\F_{q^i}$-Frobenius order sequence $(0,1,q)$ for $i=2,4$. Finally, the Gauss map of $\yy$ is $\gamma=(z^q:y^q:x^q:-1)$, which is purely inseparable, and $\yy^{'}$ is nondegenerate. Thus $\yy$ fulfills the hypotheses of Theorem \ref{main0}. A direct computation using Hasse derivatives and some of the equations above shows that the order sequence of $\yy^{'}$ is indeed $(0,1,q,q^3)$, and the $\F_{q^2}$-Frobenius order sequence of $\yy^{'}$ is $(0,1,q^3)$.
\end{ex}

The next result shows that we can improve \cite[Theorem 3.7]{Ar} for nonstrange curves with order sequence $(0,1,2,\ldots,n-1,\varepsilon_n)$ in which $\varepsilon_n<q$. 

\begin{cor}
Let $\xx \subset \p^n(\fqc)$ be a nonstrange curve defined over $\fq$ with order sequence $(0,1,2,\ldots,n-1,\varepsilon_n)$ such that $n<\varepsilon_n<q=p^h$, with $p>n$.  Suppose that $\xx$ is $\F_{q^{i}}$-Frobenius nonclassical for $i=1,\ldots,n-1$. Then $\varepsilon_n=p^r$ for some $r>0$, the strict Gauss map $\gamma$ is purely inseparable, the order sequence of the strict dual $\xx^{'}$ is $(0,1,p^{h-r},\ldots,p^{(n-1)h-r})$ and $\xx=\xx^{''}$ with $\gamma^{'} \circ \gamma =\Phi_{q^{(n-1)}}$. Furthermore, if $n>2$, the last term of the $\F_{q^{n-1-j}}$-Frobenius order sequence of $\xx^{'}$ equals to $p^{(n-1)h-r}$ for $j=1,\ldots,n-2$.
\end{cor}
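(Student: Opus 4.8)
The plan is to deduce the corollary from Theorem \ref{main0}, applied with the sequence $m_i=i$, $i=1,\dots,n-1$. For this choice $q^{m_i}=q^i=p^{ih}$, hence $h_i=ih$; moreover $\varepsilon_n<q=q^{m_1}$ and $q^{m_j-m_{j-1}}=q>\varepsilon_n$ for $j>1$, so every hypothesis of Theorem \ref{main0} holds \emph{except} ``$\gamma$ is purely inseparable''. Granting that, Theorem \ref{main0} yields at once $\varepsilon_n=p^r$, the order sequence $(0,1,p^{h_1-r},\dots,p^{h_{n-1}-r})=(0,1,p^{h-r},p^{2h-r},\dots,p^{(n-1)h-r})$ of $\xx'$, the identity $\xx=\xx''$ with $\gamma'\circ\gamma=\Phi_{p^{h_{n-1}}}=\Phi_{q^{n-1}}$, and (for $n>2$) that the last term of the $\F_{q^{m_{n-1}-m_j}}$-Frobenius order sequence of $\xx'$ equals $p^{h_{n-1}-r}=p^{(n-1)h-r}$ for $j=1,\dots,n-2$, where $m_{n-1}-m_j=(n-1)-j$; this is precisely the statement of the corollary. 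So the only thing to prove is that, under the present hypotheses, the purely inseparable character of $\gamma$ is automatic — this is the improvement over \cite[Theorem 3.7]{Ar}.

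To that end, first $\varepsilon_n=p^r$ with $0<r<h$ by the $p$-adic criterion \cite[Corollary 1.9]{SV} and $\varepsilon_n<q$. As $p>n$, by \cite[Theorem 7.65]{HKT} there are $z_0,\dots,z_n\in\fq(\xx)$, with $z_i$ separating for some $i$, such that $z_0^{p^r}x_0+\dots+z_n^{p^r}x_n=0$ and $\gamma=\Phi_{p^r}\circ\gamma_s$, $\gamma_s=(z_0:\dots:z_n)$ separable. Since the $\F_{q^i}$-Frobenius order sequence of $\xx$ is obtained from $(0,1,\dots,n-1,\varepsilon_n)$ by deleting one entry and is not $(0,1,\dots,n-1)$, its last term is $\varepsilon_n$, whence $\Phi_{q^i}(P)\in H_P(\xx)$ for generic $P$; this reads $z_0^{p^r}x_0^{q^i}+\dots+z_n^{p^r}x_n^{q^i}=0$, and extracting $p^r$-th roots (valid as $\fqc$ is perfect and $q^i=p^{ih}$ with $ih>r$),
\begin{equation*}
z_0\,x_0^{p^{ih-r}}+z_1\,x_1^{p^{ih-r}}+\dots+z_n\,x_n^{p^{ih-r}}=0,\qquad i=1,\dots,n-1 .
\end{equation*}

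The heart of the proof will be to show $x_0^{p^{h-r}},\dots,x_n^{p^{h-r}}\in\fqc(\xx')$. First, repeating the argument of Corollary \ref{cordeg} (which uses only $\varepsilon_n<q$), plus one more application of Lemma \ref{multi}(a), shows $\xx'$ is nondegenerate and that, generically, $P,\Phi_q(P),\dots,\Phi_{q^{n-1}}(P)$ span $H_P(\xx)$ — otherwise, stripping off Frobenius images one at a time via Lemma \ref{multi}(a) would force $\varepsilon_n\ge q$. Put $a_k=z_k/z_0\in\fqc(\xx')$ and choose a separating element $\zeta$ of $\fqc(\xx')$, which is also separating for $\fqc(\xx)$ because $\gamma_s$ is separable. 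Applying $D_\zeta^{(k)}$ to the displayed relations, and using $D_\zeta^{(m)}(x_j^{p^{ih-r}})=0$ for $0<m<p^{ih-r}$ with $p^{ih-r}\ge p^{h-r}>n$, gives $\sum_j D_\zeta^{(k)}(a_j)\,x_j^{p^{ih-r}}=0$ for $0\le k\le n$, $i=1,\dots,n-1$. Thus the $n-1$ Frobenius twists $\big(x_0^{p^{ih-r}},\dots,x_n^{p^{ih-r}}\big)$ lie in the kernel of the Wronskian-type matrix $\big(D_\zeta^{(k)}(a_j)\big)_{0\le k,j\le n}$ over $\fqc(\xx')$; since $x_0,\dots,x_n$ — hence $x_0^{p^{h-r}},\dots,x_n^{p^{h-r}}$ — are $\fq$-linearly independent, a Moore-matrix computation shows these twists are $\fqc(\xx)$-linearly independent, so that matrix has rank exactly $2$ (equivalently, the third order $\varepsilon_2^{'}$ of $\xx'$ exceeds $n$) and its kernel, which is defined over $\fqc(\xx')$, is exactly their $\fqc(\xx)$-span. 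Combining this with the companion system obtained by applying $D_\zeta^{(k)}$ to $z_0^{p^r}x_0+\dots+z_n^{p^r}x_n=0$ should pin down $\big(x_0^{p^{h-r}},\dots,x_n^{p^{h-r}}\big)$ itself as a vector over $\fqc(\xx')$. Once this is known, $\fqc(\xx)^{p^{h-r}}\subseteq\fqc(\xx')\subseteq\fqc(\xx)$, so $\fqc(\xx)/\fqc(\xx')$ is purely inseparable; being also separable (as $\gamma_s$ is), it is trivial, i.e. $\gamma_s$ is birational and $\gamma$ is purely inseparable. Theorem \ref{main0} then finishes the proof.

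I expect the main obstacle to be exactly the step in the previous paragraph: forcing the separable part of $\gamma$ to collapse. The difficulty is that the matrix $\big(D_\zeta^{(k)}(a_j)\big)$ is singular as soon as $\xx'$ is nonclassical, so one cannot simply solve for the $x_j^{p^{h-r}}$ by Cramer's rule; one has to use all $n-1$ twisted relations together and carefully track how the order sequence of $\xx'$ governs the ranks of the relevant Wronskian and Moore matrices, which is where the bound $\varepsilon_n<q$ (through Corollary \ref{cordeg} and the spanning property) does the work. The remainder is a direct translation into the framework of Theorem \ref{main0}.
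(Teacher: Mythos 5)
Your reduction is the paper's reduction: apply Theorem \ref{main0} with $m_i=i$, for which all hypotheses are immediate from the corollary's assumptions except the pure inseparability of $\gamma$. The paper supplies that one missing hypothesis in a single line by citing \cite[Theorem 3.7]{Ar} --- the very result the corollary is announced as improving --- which already asserts, under exactly these hypotheses, that the strict Gauss map is purely inseparable. So the only substantive content of your proposal beyond the (correct) bookkeeping is your attempt to reprove that inseparability from scratch, and that is where the genuine gap lies.

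Concretely: establishing that the $n-1$ twisted vectors $\bigl(x_0^{p^{ih-r}},\ldots,x_n^{p^{ih-r}}\bigr)$ lie in the kernel of the matrix $\bigl(D_\zeta^{(k)}a_j\bigr)$ and are $\fqc(\xx)$-linearly independent only shows that their $\fqc(\xx)$-span coincides with the base change of an $(n-1)$-dimensional $\fqc(\xx')$-rational subspace. That a subspace is defined over $\fqc(\xx')$ does not make any individual spanning vector $\fqc(\xx')$-rational up to scalar, which is what you need to conclude $(x_j/x_0)^{p^{h-r}}\in\fqc(\xx')$. Your proposed remedy --- ``combining this with the companion system obtained by applying $D_\zeta^{(k)}$ to $z_0^{p^r}x_0+\cdots+z_n^{p^r}x_n=0$ should pin down'' the vector --- is precisely the unproved step, and it is not clear it can work as stated, since that companion system involves the derivatives $D_\zeta^{(k)}x_j$, which are not a priori elements of $\fqc(\xx')$ and hence do not cut out an $\fqc(\xx')$-rational condition. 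The asserted rank computation for $\bigl(D_\zeta^{(k)}a_j\bigr)$ and the ``Moore-matrix computation'' are likewise left unexecuted. As written, the argument for pure inseparability is a program rather than a proof; you should either complete it (which amounts to reproving \cite[Theorem 3.7]{Ar}) or simply invoke that theorem as the paper does, after which the rest of your deduction from Theorem \ref{main0} is correct.
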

\begin{proof}
Under these hypotheses, \cite[Theorem 3.7]{Ar} provides that the Gauss map $\gamma$ of $\xx$ is purely inseparable. The result now is a direct consequence of Theorem \ref{main0}.
\end{proof}

\begin{rem}
We finish this section by pointing out that the results presented here can be very useful in some situations. In fact, as we saw in Example \ref{ex2}, with the help of Theorem 3.10, it is possible to obtain the order sequence and Frobenius order sequence of some curves without performing long computations, as required by the usual methods. In turn, as mentioned in the introduction, obtaining such sequences is a crucial step to bound the number of rational points on the curve via Sth\"or-Voloch Theory, introduced in \cite{SV}, and variation of such method (see \cite{AB} for instance). We will exploit this situation in the following example.
\end{rem}

\begin{ex}
For $p>3$, let $\xx \subset \p^3(\fqc)$ be a curve defined over $\fq$ by affine coordinate functions $1,x,y,z$, where $q=p^h$, with order sequence $(\varepsilon_0,\varepsilon_1,\varepsilon_2,\varepsilon_3)$ such that $\varepsilon_3 \neq p^{2h-r}$ for some $r<h$. Assume that there exists a separating element $u \in \fq(\xx)$ such that 
\begin{equation}\label{fncex} 
(z-z^q)+u^{p^{h-r}}(y-y^q)+u^{2p^{h-r}}(x-x^q)=0.
\end{equation}
Now, set $\psi=(1:u:u^2:z^{p^r}+y^{p^r}u+x^{p^r}u^2):\tilde{\xx} \lra \p^3(\fqc)$. Since $u$ is a separating element, we have that $\psi(\tilde{\xx}) \subset \p^{3}(\fqc)$ is nonstrange. One can check that the following hold (see the proof Proposition \ref{dualforc}): $\psi$ is birational, $\psi(\tilde{\xx})$ is nondegenerate, $(\psi(\tilde{\xx}))^{'}$ is projectively equivalent to $\xx$, the order sequence and $\fq$-Frobenius order sequence of $\psi(\tilde{\xx})$ are  $(0,1,2,p^r)$ and $(0,1,p^r)$, respectively. We claim that $\psi(\tilde{\xx})$ is $\F_{q^2}$-Frobenius classical. In fact, if otherwise holds, we have that the $\F_{q^2}$-Frobenius order sequence of $\psi(\tilde{\xx})$ equals $(0,1,q)$. Then, by Theorem \ref{main0} we have that the order sequence of $\xx$ is $(0,1,p^{h-r},p^{2h-1})$, which is a contradiction to $\varepsilon_3 \neq p^{2h-1}$.

Therefore, we obtain via Sth\"or-Voloch Theory (more precisely, from \cite[Proposition 2.4 and proof of Theorem 2.13]{SV}) that the number $N_{q^{2}}(\tilde{\xx})$ of $\F_{q^{2}}$-rational points of $\tilde{\xx}$ is bounded by
\begin{equation}\label{cota1}
N_{q^{2}}(\tilde{\xx}) \leq \frac{6(g-1)+(q^{2}+3)\cdot \deg\psi({\tilde{\xx}})}{p^r},
\end{equation}
where $g$ denotes the genus of $\xx$. Thus, we obtained a bound for $N_{q^{2}}(\tilde{\xx})$ without having the full information of the order sequence of $\xx$ on hands. The price for that is computing $\deg\psi({\tilde{\xx}})$, which sometimes can be less difficult than computing such order sequence. 
\end{ex}

\section{Construction of nonreflexive multi-Frobenius nonclassical curves}

In this section we give a flavor of how we can construct curves having interesting properties with the help of the results and ideas of Section \ref{mainl}. It should be noted that the ideas applied here are not exhaustive; they certainly can be used in future work, specially for curves that are dual of curves with some known properties (Proposition \ref{dualforc}  and Corollary \ref{ultcor} will illustrate this).

  Recall that a curve $\xx$ over a field of characteristic $p>2$ with order sequence $(\varepsilon_0,\ldots,\varepsilon_n)$ is said to be nonreflexive if $\varepsilon_2>2$.  Rephrasing Theorem \ref{main0} in the following way may give a recipe to construct nonreflexive multi-Frobenius nonclassical curves in suitable cases. The definition of $N_{c}(q_1,\ldots,q_{s})$ appears in Lemma \ref{multi}.

\begin{prop}\label{aplic1}
Let $\xx \subset \p^n(\fqc)$ be a nonstrange curve defined over $\fq$ by coordinate functions $x_0,\ldots,x_m \in \fq(\xx)$ with order sequence $(0,1,2,\ldots,n-1,\varepsilon_n)$ such that $\varepsilon_n>n$, with $p>n>2$. Let $1 \leq m_1< \cdots < m_{n-1}$, be a sequence of integers such that $\varepsilon_n<\min\{q^{m_1},q^{m_j-m_{j-1}}\}$ for $j>1$. If the Gauss map of $\xx$ is purely inseparable and
$$
\det(N_{1}(q^{m_1},\ldots,q^{m_{n-1}}))=0,
$$
then $\varepsilon_n$ is a power of $p$, the strict dual curve $\xx^{'}$ of $\xx$ has order sequence $(0,1,q^{m_1}/\varepsilon_n,\ldots,\\ q^{m_{n-1}}/\varepsilon_n)$ and $\xx=\xx^{''}$ with $\gamma^{'} \circ \gamma =\Phi_{q^{m_{n-1}}}$. Moreover, the last term of the $\F_{q^{m_{n-1}-m_j}}$-Frobenius order sequence of $\xx^{'}$ equals to $q^{m_{n-1}}/\varepsilon_n$ for $j=1,\ldots,n-2$.
\end{prop}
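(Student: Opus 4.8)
The plan is to verify all the hypotheses of Theorem \ref{main0} and then quote it; the only point requiring work is to deduce from $\det(N_1(q^{m_1},\ldots,q^{m_{n-1}}))=0$ that $\xx$ is $\F_{q^{m_i}}$-Frobenius nonclassical for $i=1,\ldots,n-1$. To begin, $\varepsilon_n=p^r$ for some $r>0$ by the $p$-adic criteria \cite[Corollary 1.9]{SV}; writing $q^{m_i}=p^{h_i}$ and using $p^r=\varepsilon_n<q^{m_1}\le q^{m_i}$ one gets $h_i>r$ and $q^{m_i}/\varepsilon_n=p^{h_i-r}$, so that, after this substitution, the conclusions of Theorem \ref{main0} (order sequence $(0,1,p^{h_1-r},\ldots,p^{h_{n-1}-r})$ of $\xx^{'}$; $\xx=\xx^{''}$ with $\gamma^{'}\circ\gamma=\Phi_{p^{h_{n-1}}}$; and, since $n>2$, the last term $p^{h_{n-1}-r}$ of the $\F_{q^{m_{n-1}-m_j}}$-Frobenius order sequence of $\xx^{'}$ for $j=1,\ldots,n-2$) are exactly the assertions of the proposition. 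All remaining hypotheses of Theorem \ref{main0} — nonstrangeness, the order sequence $(0,1,\ldots,n-1,\varepsilon_n)$ with $\varepsilon_n>n$, $p>n$, $\varepsilon_n<q^{m_1}$, purely inseparable Gauss map, and $q^{m_j-m_{j-1}}>\varepsilon_n$ for $j>1$ — are assumed here.

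Write $\phi=(x_0,\ldots,x_n)$ and $q_i=q^{m_i}$. First I would show that $\rank(M_{n-1})=n$, where $M_{n-1}$ is the matrix of Lemma \ref{multi} with rows $\phi,\Phi_{q_1}\phi,\ldots,\Phi_{q_{n-1}}\phi$. If this failed, Lemma \ref{multi}(a) would put $\{P,\Phi_{q_1}(P),\ldots,\Phi_{q_{n-1}}(P)\}$ inside a linear subvariety of dimension $n-2$ for generic $P\in\xx$; then, arguing exactly as in the proof of Corollary \ref{cordeg} — choosing the index at which the rank of the trailing rows stabilizes and applying Lemma \ref{multi}(a) again, after composing with the inverse of a Frobenius map, which sends a linear subvariety to a linear subvariety of the same dimension — one would obtain $\varepsilon_n\ge q^{m_1}$ or $\varepsilon_n\ge q^{m_j-m_{j-1}}$ for some $j>1$, contradicting $\varepsilon_n<q^{m_1}$ and $\varepsilon_n<q^{m_j-m_{j-1}}$. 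Hence $\rank(M_{n-1})=n$.

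Now Lemma \ref{multi}(b), applied with $s=n-1<n$, produces a minimal positive integer $c$ with $\rank(N_c(q_1,\ldots,q_{n-1}))=n+1$ such that, for every $j\in\{1,\ldots,n-1\}$, one has $c=\nu_{j,\ell_j}$ for some $\ell_j$ with $0<\ell_j\le n-1$, where $(\nu_{j,0},\ldots,\nu_{j,n-1})$ is the $\F_{q^{m_j}}$-Frobenius order sequence of $\xx$, and moreover $c>1$ forces $p\mid c$. The hypothesis $\det(N_1(q_1,\ldots,q_{n-1}))=0$ gives $\rank(N_1(q_1,\ldots,q_{n-1}))\le n$, and since deleting the first row of $N_1(q_1,\ldots,q_{n-1})$ leaves $M_{n-1}$ we get $\rank(N_1(q_1,\ldots,q_{n-1}))=n<n+1$; thus $c\ne1$, hence $c>1$, $p\mid c$, and $c\ge p$. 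If $\xx$ were $\F_{q^{m_j}}$-Frobenius classical, then $(\nu_{j,0},\ldots,\nu_{j,n-1})=(0,1,\ldots,n-1)$, whose entries are all less than $p$ because $p>n$, contradicting $c=\nu_{j,\ell_j}\ge p$. So $\xx$ is $\F_{q^{m_j}}$-Frobenius nonclassical for every $j=1,\ldots,n-1$, and, being nonclassical, the order deleted from $(0,1,\ldots,n-1,\varepsilon_n)$ to form each $\F_{q^{m_j}}$-Frobenius order sequence is not $\varepsilon_n$, so $\varepsilon_n$ remains its last term. Theorem \ref{main0} now applies and delivers every assertion of the proposition.

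The step I expect to be the main obstacle is the first one, ruling out $\rank(M_{n-1})<n$: that is where the numerical hypothesis $\varepsilon_n<\min\{q^{m_1},q^{m_j-m_{j-1}}\}$ enters, and it requires carefully reproducing the rank-stabilization argument from the proof of Corollary \ref{cordeg}, using that a Frobenius map carries a linear subvariety to a linear subvariety of the same dimension. Once $\rank(M_{n-1})=n$ is in hand, the remainder is a brief deduction through Lemma \ref{multi}(b) and the definition of Frobenius (non)classicality, followed by the appeal to Theorem \ref{main0}.
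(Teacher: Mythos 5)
Your proposal is correct and follows the same route as the paper: the paper's own proof is exactly ``$\det(N_1(q^{m_1},\ldots,q^{m_{n-1}}))=0$ together with $\varepsilon_n<\min\{q^{m_1},q^{m_j-m_{j-1}}\}$ gives, via Lemma \ref{multi}, that $\xx$ is $\F_{q^{m_i}}$-Frobenius nonclassical for all $i$, and then Theorem \ref{main0} applies.'' Your write-up merely makes explicit the two sub-steps the paper leaves implicit (ruling out $\rank(M_{n-1})<n$ via part (a) and the Corollary \ref{cordeg} argument, then forcing the minimal $c$ of part (b) to satisfy $c\geq p>n$), and both are carried out correctly.
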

\begin{proof}
 Since $\det(N_{1}(q^{m_1},\ldots,q^{m_{n-1}}))=0$ and $\varepsilon_n<\min\{q^{m_1},q^{m_j-m_{j-1}}\}$ for $j>1$, we conclude from Lemma \ref{multi} that $\xx$ is $\F_{q^{m_i}}$-Frobenius nonclassical for $i=1,\ldots,n-1$. Hence, the result follows immediately from Theorem \ref{main0}.
\end{proof}

Given a field $\kk$ of characteristic $p>0$, let $\xx \subset \p^n(\kk)$ be a curve defined over $\kk$. Denote by $\xx_{\text{reg}} \subseteq \xx$ the set of nonsingular points of $\xx$ and, given $P \in \xx_{\text{reg}}$, let $L_1(P)$ denote the tangent line to $\xx$ at $P$. The tangent variety of $\xx$ is defined by
$$
\Tan(\xx)=\overline{\bigcup_{P \in \xx_{\text{reg}}}L_1(P)}.
$$
In \cite[Theorem 3.3]{Ho} Homma provides a characterization of reflexive curves such that $\Tan(\xx)$ is nonreflexive. In \cite{Ho2}, the same author refines his result for space curves, i.e., curves embedded in $\p^3(\kk)$. More precisely, it is proven that the (non)reflexivity of $\xx$ and the tangent surface $\Tan(\xx)$ can be obtained only with the knowledge of the order sequence of $\xx$, see \cite[Theorem 0.1]{Ho2}. With this later result in mind, we are able to give the following.

\begin{cor}\label{cortan}
Let $\xx \subset \p^3(\fqc)$ be a nonstrange curve defined over $\fq$ by coordinate functions $x_0,x_1,x_2,x_3 \in \fq(\xx)$ with order sequence $(0,1,2,\varepsilon_3)$ such that $\varepsilon_3>3$, with $p>3$. Let $1 \leq m_1<  m_{2}$, be a sequence of integers such that $\varepsilon_3<\min\{q^{m_1},q^{m_2-m_1}\}$. If the Gauss map of $\xx$ is purely inseparable and
$$
\det(N_{1}(q^{m_1},q^{m_2}))=0,
$$
then $\varepsilon_3$ is a power of $p$, the order sequence of the strict dual $\xx^{'}$ of $\xx$ is $(0,1,q^{m_1}/\varepsilon_3, q^{m_2}/\varepsilon_3)$ and $\xx=\xx^{''}$ with $\gamma^{'} \circ \gamma =\Phi_{q^{m_{2}}}$. In particular,  $\xx^{'}$ and its tangent surface $\Tan(\xx^{'})$ are nonreflexive. Furthermore, the  $\F_{q^{m_{2}-m_1}}$-Frobenius order sequence of $\xx^{'}$ is $(0,1,q^{m_2}/\varepsilon_3)$.
\end{cor}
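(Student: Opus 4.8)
The plan is to deduce the corollary from Proposition \ref{aplic1} together with Homma's classification, the only genuinely new work being the determination of the \emph{full} $\F_{q^{m_2-m_1}}$-Frobenius order sequence of $\xx^{'}$. First I would observe that the hypotheses of the corollary are, word for word, those of Proposition \ref{aplic1} in the case $n=3$: nonstrangeness, order sequence $(0,1,2,\varepsilon_3)$ with $\varepsilon_3>3$, $p>3$, $\varepsilon_3<\min\{q^{m_1},q^{m_2-m_1}\}$, purely inseparable Gauss map, and $\det(N_1(q^{m_1},q^{m_2}))=0$. As in the proof of that proposition, $\det(N_1(q^{m_1},q^{m_2}))=0$ together with $\varepsilon_3<\min\{q^{m_1},q^{m_2-m_1}\}$ and Lemma \ref{multi} force $\xx$ to be $\F_{q^{m_1}}$- and $\F_{q^{m_2}}$-Frobenius nonclassical, and Proposition \ref{aplic1} then gives at once that $\varepsilon_3=p^r$ for some $r>0$, that $\xx^{'}$ is nondegenerate with order sequence $(0,1,q^{m_1}/\varepsilon_3,q^{m_2}/\varepsilon_3)$, that $\xx^{''}=\xx$ with $\gamma^{'}\circ\gamma=\Phi_{q^{m_2}}$, and that the last term of the $\F_{q^{m_2-m_1}}$-Frobenius order sequence of $\xx^{'}$ equals $q^{m_2}/\varepsilon_3$. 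Writing $q^{m_i}=p^{h_i}$, so the order sequence of $\xx^{'}$ is $(0,1,p^{h_1-r},p^{h_2-r})$, it remains to prove (a) the non-reflexivity of $\xx^{'}$ and of $\Tan(\xx^{'})$, and (b) that the $\F_{q^{m_2-m_1}}$-Frobenius order sequence of $\xx^{'}$ is the entire sequence $(0,1,p^{h_2-r})$, not merely that its last term is $p^{h_2-r}$.

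For (a): from $\varepsilon_3=p^r<q^{m_1}=p^{h_1}$ one gets $h_1-r\geq 1$, hence the second order of $\xx^{'}$ is $p^{h_1-r}\geq p>2$, so $\xx^{'}$ is non-reflexive by the criterion recalled in Section \ref{back}. For the tangent surface I would invoke \cite[Theorem 0.1]{Ho2}, which reads off the reflexivity of a space curve and of its tangent surface from the order sequence alone; since the order sequence of $\xx^{'}$ is $(0,1,p^{h_1-r},p^{h_2-r})$ with both $p^{h_1-r}$ and $p^{h_2-r}$ exceeding $2$, this places $\Tan(\xx^{'})$ in the non-reflexive range (one should of course match the precise inequality of \cite[Theorem 0.1]{Ho2} governing this branch).

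For (b): by \cite[Proposition 2.1]{SV} the $\F_{q^{m_2-m_1}}$-Frobenius order sequence $(\nu_0,\nu_1,\nu_2)$ of the space curve $\xx^{'}$ is obtained from its order sequence $(0,1,p^{h_1-r},p^{h_2-r})$ by deleting one term; one always has $\nu_0=0$ (for generic $Q\in\tilde{\xx^{'}}$ the point $\Phi_{q^{m_2-m_1}}(Q)$ is distinct from $Q$, otherwise $\xx^{'}$ would be a point, so the first two rows of \eqref{fncl} are independent), and by the previous step the deleted term is not $p^{h_2-r}$, so $\nu_1\in\{1,p^{h_1-r}\}$. Everything thus reduces to proving $\nu_1=1$. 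Writing $x_0,\dots,x_3$ and $z_0,\dots,z_3$ for the coordinate functions of $\xx$ and of $\xx^{'}$ (so $\gamma=(z_0^{p^r}:\cdots:z_3^{p^r})$ and, $\gamma$ being purely inseparable, $\fqc(\xx^{'})=\fqc(\xx)$), the equality $\nu_1=1$ is equivalent to the linear independence over $\fqc(\xx^{'})$ of the vectors $(z_i^{q^{m_2-m_1}})_i$, $(z_i)_i$, $(D_\zeta^{(1)}z_i)_i$, i.e. to the statement that $\Phi_{q^{m_2-m_1}}(Q)$ does not lie on the tangent line $T_Q\xx^{'}$ for generic $Q$. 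My approach would be to feed the relations obtained in the proof of Theorem \ref{main0} — namely $\sum_i z_i^{p^r}x_i=0$ and the two relations $\sum_i z_i x_i^{p^{h_1-r}}=0$, $\sum_i z_i x_i^{p^{h_2-r}}=0$ coming from the $\F_{q^{m_1}}$- and $\F_{q^{m_2}}$-Frobenius nonclassicality of $\xx$ — into the $\mathcal{D}_k(Q)$ dimension count carried out in the proof of Theorem \ref{main-1}: these relations already place $\Phi_{q^{m_2-m_1}}(Q)$ in the osculating plane $H_Q(\xx^{'})$, and what is needed is to verify that it meets $T_Q\xx^{'}$ transversally, which leaves $(\nu_0,\nu_1,\nu_2)=(0,1,p^{h_2-r})$ as the only possibility consistent with $\nu_2=p^{h_2-r}$ and with $\xx^{'}$ nondegenerate. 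Pinning down precisely which osculating subspace of $\xx^{'}$ receives the $\F_{q^{m_2-m_1}}$-Frobenius image is the step I expect to be the main obstacle, since it is a genuine constraint rather than an automatic one and rests on a delicate compatibility between the two Frobenius relations and the first Hasse derivative of $z$. Rewriting $p^{h_1-r}=q^{m_1}/\varepsilon_3$ and $p^{h_2-r}=q^{m_2}/\varepsilon_3$ then yields the statement as written.
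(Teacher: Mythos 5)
Your overall route coincides with the paper's: the published proof is a one-line appeal to Proposition \ref{aplic1} and \cite[Theorem 0.1]{Ho2}, which is exactly your first step together with your item (a). Your verification that the hypotheses are the $n=3$ instance of Proposition \ref{aplic1} is correct, as is the deduction of the nonreflexivity of $\xx^{'}$ and of $\Tan(\xx^{'})$ from the order sequence $(0,1,q^{m_1}/\varepsilon_3,q^{m_2}/\varepsilon_3)$ via Homma's theorem.

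The one genuine gap is your item (b), and you have flagged it yourself. Proposition \ref{aplic1} (and, upstream, Theorems \ref{main0} and \ref{main-1}) only determines the \emph{last} term of the $\F_{q^{m_2-m_1}}$-Frobenius order sequence of $\xx^{'}$; to get the full sequence $(0,1,q^{m_2}/\varepsilon_3)$ one must in addition exclude $\nu_1=q^{m_1}/\varepsilon_3$, i.e.\ exclude that $\Phi_{q^{m_2-m_1}}(Q)$ lies on the tangent line to $\xx^{'}$ at a generic $Q$. Your sketch does not accomplish this: the relations $\sum_i z_i x_i^{p^{h_j-r}}=0$ place the Frobenius image in the osculating plane $H_Q(\xx^{'})$, which is what yields $\nu_2=q^{m_2}/\varepsilon_3$, but they say nothing by themselves about incidence with the tangent line, and no transversality argument is actually supplied. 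You should be aware that the paper's own proof does not supply one either --- the corollary is presented as an immediate consequence of Proposition \ref{aplic1}, whose conclusion is strictly weaker on exactly this point --- and that in the final (unnumbered) proposition of Section 4 the author must impose the extra hypotheses $m_2\neq 2m_1$ and $\varepsilon_3<q^{|m_2-2m_1|}$ precisely in order to rule out a tangent-line incidence of Frobenius images of this kind. So either the claim about the full $\F_{q^{m_2-m_1}}$-Frobenius order sequence should be weakened to a statement about its last term (matching what Proposition \ref{aplic1} delivers), or a genuine argument for $\nu_1=1$ must be produced; your proposal, as written, does neither.
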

\begin{proof}
This follows directly from Proposition \ref{aplic1} and \cite[Theorem 0.1]{Ho2}.
\end{proof}

We now give a more explicit construction. Consider the curves $\ff$ and $\yy$ constructed in Example \ref{ex2}. Note that the strict dual $\yy^{'}$ is projectively equivalent to $\ff$. Hence, the order sequence of $\ff$ is $(0,1,q,q^3)$. The following result generalizes Example \ref{ex2}.

\begin{prop}\label{dualforc}
Let $r,s_1,\ldots,s_{n-1}$ be positive integers, with $s_1<s_2<\cdots<s_{n-1}$ such that $p^{s_i+r}$ is a power of $q$ for all $i=1,\ldots,n-1$. Assume that $\xx \subset \p^n(\fqc)$, with $p>n$, is a nondegenerate curve defined over $\fq$ by the coordinate functions $1,x_1,\ldots,x_n$, where $x_1$ is a separating element of $\fq(\xx)$ such that $\xx$ is contained in a nonlinear component of $\overline{S_n}$, where $S_n:=$
$$
\left|
\begin{array}{ccccccc}
1 & X_1^{p^{s_{n-1}+r}} & X_1^{2p^{s_{n-1}+r}} &  \cdots & X_1^{(n-2)p^{s_{n-1}+r}}&X_1^{(n-1)p^{s_{n-1}+r}}&\left(\sum\limits_{j=0}^{n-1}X_{n-j}^{p^r}X_1^{j}\right)^{p^{s_{n-1}+r}}\\
\cdots & \cdots&\cdots&\cdots& \cdots\\
1 & X_1^{p^{s_{1}+r}} & X_1^{2p^{s_{1}+r}} &  \cdots&X_1^{(n-2)p^{s_{1}+r}} & X_1^{(n-1)p^{s_{1}+r}}&\left(\sum\limits_{j=0}^{n-1}X_{n-j}^{p^r}X_1^{j}\right)^{p^{s_{1}+r}}\\
1 & X_1 & X_1^{2} &  \cdots&X_1^{n-2} & X_1^{n-1} & \sum\limits_{j=0}^{n-1}X_{n-j}^{p^r}X_1^{j}\\
0 & 0 & 0& \cdots& 0 & 1 & X_1^{p^{r}}
\end{array} \right| =0.
$$ 
Then $\xx$ has order sequence $(0,1,p^{s_1},p^{s_2},\ldots,p^{s_{n-1}})$, and if $n>2$, the last term of the $\F_{p^{s_{n-1}-s_j}}$-Frobenius order sequence of $\xx$ equals to $p^{s_{n-1}}$ for $j=1,\ldots,n-2$.
\end{prop}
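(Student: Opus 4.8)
The plan is to convert the single scalar identity ``$S_n$ vanishes on $\xx$'' into a family of $n-1$ Frobenius-type relations on $\xx$, and then read off both assertions from Proposition~\ref{ordseqdua}. Write $w:=\sum_{j=0}^{n-1}x_{n-j}^{p^r}x_1^{\,j}\in\fq(\xx)$ and $\psi:=(1:X_1:\cdots:X_1^{n-1}:W)$. First I would record that the last row of the matrix defining $S_n$ is $D_{X_1}^{(n-1)}\psi$: indeed $D_{X_1}^{(n-1)}W=X_1^{p^r}$ and $D_{X_1}^{(n-1)}(X_1^{\,j})=0$ for $j<n-1$ (using $n-1<p$), while the other rows are $\psi$ and its Frobenius twists $\Phi_{p^{s_{n-k}+r}}\psi$, $k=1,\ldots,n-1$. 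Performing the column operation $C_n\mapsto C_n-X_1^{p^r}C_{n-1}$ clears the last row to $(0,\ldots,0,1,0)$; expanding along it turns $S_n|_\xx=0$ into the vanishing of an $n\times n$ determinant whose first $n-1$ columns are the powers $1,x_1,\ldots,x_1^{\,n-2}$ of the entries $x_1^{p^{s_{n-1}+r}},\ldots,x_1^{p^{s_1+r}},x_1$ (so its $(n-1)\times(n-1)$ cofactors along the last column are nonzero generalized Vandermonde polynomials in $x_1$) and whose last column is $\fq(\xx)$-multilinear in the $x_{n-j}^{p^r}$.

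The heart of the argument is to show that, because $\xx$ is nondegenerate and lies on a \emph{nonlinear} component of $\overline{S_n}$, this vanishing is \emph{equivalent} to the simultaneous relations
\begin{equation}\label{keyrel}
\sum_{k=0}^{n}v_k^{\,p^{s_i}}x_k=0\quad(i=1,\ldots,n-1),\qquad (v_0,\ldots,v_n)=\bigl(w,\,-x_1^{\,n-1},\,-x_1^{\,n-2},\,\ldots,\,-x_1,\,-1\bigr),
\end{equation}
equivalently $w^{\,p^{s_i}}=\sum_{j=0}^{n-1}x_{n-j}\,x_1^{\,j\,p^{s_i}}$ for $i=1,\ldots,n-1$. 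One implication is a direct substitution: if \eqref{keyrel} holds, the last column of the reduced determinant is an $\fq(\xx)$-linear combination of the first $n-1$, so the determinant vanishes. For the converse, it is the nonvanishing of the Vandermonde cofactors — which is exactly what the ``nonlinear component'' hypothesis secures (it discards the linear factors of $S_n$, e.g.\ $\{X_1=0\}$, which one checks divides $S_n$ and along which those cofactors degenerate) — that lets one peel off each relation of \eqref{keyrel} separately. Conceptually, \eqref{keyrel} read on the image curve $\yy:=\psi(\xx)$ — which lies on the cone over the rational normal curve of degree $n-1$, hence has order sequence $(0,1,\ldots,n-1,p^r)$ and purely inseparable Gauss map with strict dual projectively equivalent to $\xx$ — says precisely that $\yy$ is $\F_{p^{s_i+r}}$-Frobenius nonclassical with last Frobenius order $p^r$ for each $i$; so the statement being proved is the dual counterpart of the hypothesis of Theorem~\ref{main0}, and generalizes Example~\ref{ex2}.

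Granting \eqref{keyrel}, the rest is routine. Since $x_1$ is separating and $n-1<p$, the element $v_1=-x_1^{\,n-1}$ is again separating, so Proposition~\ref{ordseqdua} applied to each relation of \eqref{keyrel} shows that the hyperplane $H^{(i)}_P:\sum_k(v_k(P))^{p^{s_i}}X_k=0$ satisfies $I(P,\xx\cap H^{(i)}_P)\ge p^{s_i}$ at a generic $P\in\tilde{\xx}$, with equality iff $\sum_k(D_\zeta^{(1)}v_k)^{p^{s_i}}x_k\ne0$. I would verify this nonvanishing as in the proof of Theorem~\ref{main0}: if it failed for some $i$, iterating Hasse derivatives on \eqref{keyrel} would produce, in independent position, more hyperplanes than $\xx$ can carry, forcing $\xx$ to have more than $n+1$ distinct orders, a contradiction. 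Then $p^{s_1},\ldots,p^{s_{n-1}}$ all occur as intersection multiplicities of $\xx$ with hyperplanes at a generic point, so — together with the ever-present orders $0$ and $1$ — they form $n+1$ distinct orders, and as $\xx$ is nondegenerate in $\p^n$ it has exactly $n+1$ of them; hence its order sequence is $(0,1,p^{s_1},\ldots,p^{s_{n-1}})$. For $n>2$, the osculating hyperplane at a generic $P$ is $H^{(n-1)}_P$, and raising the $i$th relation of \eqref{keyrel} to the power $p^{\,s_{n-1}-s_i}$ gives $\sum_k v_k^{\,p^{s_{n-1}}}x_k^{\,p^{s_{n-1}-s_i}}=0$, i.e.\ $\Phi_{p^{s_{n-1}-s_i}}(P)\in H_P(\xx)$ for infinitely many $P$; by Proposition~\ref{oschypeq} and \eqref{fncl} this says the last term of the $\F_{p^{s_{n-1}-s_i}}$-Frobenius order sequence of $\xx$ equals $\varepsilon_n=p^{s_{n-1}}$ for $i=1,\ldots,n-2$.

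The hard part will be the first implication of the key step — extracting all $n-1$ relations \eqref{keyrel} from the one equation $S_n|_\xx=0$. This exploits the additive (linearized-polynomial) shape of the reduced determinant in $w$, with distinct $p$-power frequencies $1,p^{s_1+r},\ldots,p^{s_{n-1}+r}$ and explicit Vandermonde cofactors, together with the defining expression $w=\sum_j x_{n-j}^{p^r}x_1^{\,j}$: one isolates the contribution of each frequency by a Vandermonde elimination on the exponents while discarding the spurious solutions coming from the linear factors of $S_n$, and it is here that the nondegeneracy of $\xx$ and the assumption that $\xx$ lies on a nonlinear component of $\overline{S_n}$ are indispensable.
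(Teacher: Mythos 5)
Your overall route coincides with the paper's: you pass to $\psi=(1:x_1:\cdots:x_1^{n-1}:w)$, recognize that the relations $w^{p^{s_i}}=\sum_{j}x_{n-j}x_1^{jp^{s_i}}$ ($i=1,\ldots,n-1$) are exactly the $\F_{p^{s_i+r}}$-Frobenius nonclassicality of $\psi(\tilde{\xx})$ (whose strict dual is projectively equivalent to $\xx$), check the first-order-derivative nonvanishing hypothesis, and conclude via Proposition \ref{ordseqdua}. The paper does the same, quoting Theorem \ref{main-1} where you apply Proposition \ref{ordseqdua} directly; that difference, and your somewhat vaguer version of the paper's Vandermonde/rational-normal-curve contradiction for the derivative condition, are not the issue.

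The genuine gap is at the step you yourself defer as ``the hard part'': extracting the $n-1$ relations from the single identity $S_n|_{\xx}=0$. You assert an equivalence but only prove the easy direction, and the sketch you give for the converse does not work. Concretely, set $q_i:=p^{s_i+r}$ and $E_i:=w^{q_i}-\sum_j x_{n-j}^{p^r}x_1^{jq_i}$; performing the column operation $C_{n+1}\mapsto C_{n+1}-\sum_{j=0}^{n-1}X_{n-j}^{p^r}C_j$ and expanding along the last column shows that $S_n|_{\xx}=\sum_{i=1}^{n-1}\pm V_iE_i$, where each $V_i$ is a nonzero Vandermonde determinant in $x_1,x_1^{q_1},\ldots,x_1^{q_{n-1}}$. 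Thus the hypothesis is a \emph{single} $\fqc(\xx)$-linear relation among the $E_i$ with nonzero coefficients, and no ``Vandermonde elimination on the exponents'' can turn one equation into $n-1$ independent ones: a priori $S_n|_\xx=0$ only says that $D^{(n-1)}\psi,\psi,\Phi_{q_1}\psi,\ldots,\Phi_{q_{n-1}}\psi$ are linearly dependent, i.e.\ that \emph{some} hyperplane through a generic $P$ and through the point $D^{(n-1)}\psi(P)$ contains all the $\Phi_{q_i}(P)$ --- not that the osculating hyperplane does, which is what $E_i=0$ for every $i$ asserts. The paper closes exactly this step by applying Lemma \ref{multi}(b) to $\psi(\tilde{\xx})$: since $S_n|_\xx=\pm\det N_{n-1}(q_1,\ldots,q_{n-1})$ vanishes and the minimal $c$ with $N_c$ of full rank is either $1$ or divisible by $p>n$, while the order sequence of $\psi(\tilde{\xx})$ is $(0,1,\ldots,n-1,p^r)$, one forces the last $\F_{q_i}$-Frobenius order of $\psi(\tilde{\xx})$ to be $p^r$ for every $i$ simultaneously. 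Your argument needs this lemma (or a substitute for it); as written, the central claim of your proof is unsupported.
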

\begin{proof}
Define $$\psi=(1:x_1:x_1^2:\cdots:x_1^{n-1}:x_{n}^{p^r}+x_{n-1}^{p^r}x_1+x_{n-2}^{p^r}x_1^2+\cdots+x_{2}^{p^r}x_1^{n-2}+x_1^{p^r+n-1}).$$ We clearly have that the map $\psi$ is birational and $\psi(\tilde{\xx})$ is nondegenerate and nonstrange. The Gauss map of $\psi(\tilde{\xx})$ is $\gamma=(x_n^{p^r}:x_{n-1}^{p^r}:\cdots:x_1^{p^r}:-1)$. Thus, $\gamma$ purely inseparable, and $\xx$ projectively equivalent to the strict dual of $\psi(\tilde{\xx})$  (in particular, the strict dual of $\psi(\tilde{\xx})$ is nondegenerate). Via Proposition \ref{oschypeq}(a), it follows that the order sequence of $\psi(\tilde{\xx})$ is $(0,1,\ldots,n-1, p^r)$. Since $\xx \subset \overline{S_n}$ and $x_1$ is a separating element, we conclude from Lemma \ref{multi} that $\psi(\tilde{\xx})$ is $\F_{p^{s_i+r}}$-Frobenius nonclassical for $i=1,\ldots,n-1$. Since the $\F_{p^{s_i+r}}$-Frobenius order sequence is a sub sequence of $(0,1,\ldots,n-1, p^r)$, we conclude that the $\F_{p^{s_i+r}}$-Frobenius order sequence of $\psi(\tilde{\xx})$ is $(0,1,\ldots,n-2, p^r)$ for $i=1,\ldots,n-1$. Now, set $$f:=x_{n}^{p^r}+x_{n-1}^{p^r}x_1+x_{n-2}^{p^r}x_1^2+\cdots+x_{2}^{p^r}x_1^{n-2}+x_1^{p^r+n-1}$$ and $x:=x_1$. From
$$
x_n+x_{n-1}x^{p^{s_i}}+x_{n-2}x^{2p^{s_i}}+\cdots+x^{(n-1)p^{s_i}+1}-f^{p^{s_i}}=0 
$$
for $i=1,\ldots,n-1$, we obtain, for all such $i$,
\begin{equation}\label{auxiliares}
D_x^{(1)}x_n+D_x^{(1)}x_{n-1}x^{p^{s_i}}+D_x^{(1)}x_{n-2}x^{2p^{s_i}}+\cdots+x^{(n-1)p^{s_i}}=0.
\end{equation}
Assume that, for some $j \in\{1,\ldots,n-1\}$, we have
$$
x_{n-1}+2x_{n-2}x^{p^{s_j}}+\cdots+(n-1)x^{(n-2)p^{s_j}+1}-(D_x^{(1)}f)^{p^{s_j}}=0.
$$
Applying $D_x^{(1)}$ in both sides of the last equality, we obtain
\begin{equation}\label{auxiliar2}
D_x^{(1)}x_{n-1}+2D_x^{(1)}x_{n-2}x^{p^{s_j}}+\cdots+(n-1)x^{(n-2)p^{s_j}}=0.
\end{equation}
We then have $n$ linear equations with $n-1$ variables over the the field $\fq(x)^{p^{s_1}}$ with a solution, namely $(D_x^{(1)}x_2,\ldots,D_x^{(1)}x_n)$. This gives that
$$\left|
\begin{array}{ccccc}
1 & x^{p^{s_1}} & x^{2p^{s_1}} &  \cdots & x^{(n-1)p^{s_1}}\\
1 & x^{p^{s_2}} & x^{2p^{s_2}} &  \cdots & x^{(n-1)p^{s_2}}\\
\cdots & \cdots&\cdots&\cdots& \cdots\\
1 & x^{p^{s_{n-1}}} & x^{2p^{s_{n-1}}} &  \cdots & x^{(n-1)p^{s_{n-1}}}\\
0 & 1 & 2x^{p^{s_j}}& \cdots & (n-1)x^{(n-2)p^{s_j}}
\end{array} \right| =0.
$$
Hence, arguing by induction on $n$ and using Lemma \ref{multi}, we conclude that, for some $r \leq n-1$, the normal rational curve of $\p^r(\fqc)$ defined by the morphism $(1:x:x^2:\cdots:x^{r}):\p^1(\fqc) \lra \p^{r}(\fqc)$ is nonclassical, which is a contradiction since $p>n$. This means that, for all $j$, \eqref{auxiliar2} does not hold. The result then follows from Remark \ref{obs1} and Theorem \ref{main-1}.
\end{proof}


\begin{cor}\label{ultcor}
Let $r,s_1,s_{2}$ be positive integers, with $s_1<s_2$ and $p^{s_i+r}$ being a power of $q$ for $i=1,2$. Assume that $\xx \subset \p^3(\fqc)$, with $p>3$, is a nondegenerate curve defined over $\fq$ by the coordinate functions $1,x,y,z$, where $x$ is a separating element of $\fq(\xx)$ such that $\xx$ is contained in a nonlinear component of $\overline{S_3}$, where
$$
S_3:=\left|
\begin{array}{cccc}
1 & X^{p^{s_{2}+r}} & X^{2p^{s_{2}+r}} &\left(Z^{p^r}+Y^{p^r}X+X^{p^{r}+2}\right)^{p^{s_{2}+r}}\\
1 & X^{p^{s_{1}+r}} & X^{2p^{s_{1}+r}} &\left(Z^{p^r}+Y^{p^r}X+X^{p^{r}+2}\right)^{p^{s_{1}+r}}\\
1 & X & X^{2} &Z^{p^r}+Y^{p^r}X+X^{p^{r}+2}\\
0 & 0 & 1 & X^{p^{r}}
\end{array} \right| =0.
$$
Then the order sequence of $\xx$ is $(0,1,p^{s_1},p^{s_2})$. In particular, $\xx$ and its tangent surface $\Tan(\xx)$ are nonreflexive. Moreover, the $\F_{p^{s_2-s_1}}$-Frobenius order sequence of $\xx$ is $(0,1,p^{s_2})$.
\end{cor}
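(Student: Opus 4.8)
The plan is to read the statement off Proposition~\ref{dualforc} in the case $n=3$ and then supply two short additions. First, $S_3$ is exactly $S_n$ with $n=3$ once one renames $x_1=x$, $x_2=y$, $x_3=z$, so the hypotheses of Proposition~\ref{dualforc} with $n=3$ are satisfied; applying it gives at once that the order sequence of $\xx$ is $(0,1,p^{s_1},p^{s_2})$, and---taking $j=1$, the only admissible value when $n=3$---that the last term of the $\F_{p^{s_2-s_1}}$-Frobenius order sequence of $\xx$ equals $p^{s_2}$.

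The reflexivity assertions are then immediate. Since $p>3$ and $s_1\geq 1$ we have $\varepsilon_2=p^{s_1}\geq p>2$, so $\xx$ is nonreflexive by the criterion recalled in Section~\ref{back} (from \cite{HV} and \cite{He}). Homma's \cite[Theorem~0.1]{Ho2} expresses, for a space curve, the (non)reflexivity of both the curve and its tangent surface solely through the order sequence, and for order sequence $(0,1,p^{s_1},p^{s_2})$ with $p^{s_1}>2$ it yields that $\Tan(\xx)$ is nonreflexive as well; this is the same step as in the proof of Corollary~\ref{cortan}.

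It remains to sharpen ``last Frobenius order $=p^{s_2}$'' into the full sequence $(0,1,p^{s_2})$. Write $q'=p^{s_2-s_1}$. The $\F_{q'}$-Frobenius order sequence $(\nu_0,\nu_1,\nu_2)$ is obtained from $(0,1,p^{s_1},p^{s_2})$ by deleting exactly one term $\varepsilon_I$; one always has $\nu_0=0$, and $\nu_2=p^{s_2}$ by the first paragraph, so $\varepsilon_0$ and $\varepsilon_3$ are retained and the sequence is either $(0,1,p^{s_2})$ or $(0,p^{s_1},p^{s_2})$. Ruling out the latter amounts to showing $\nu_1=1$, equivalently that a generic point $Q\in\xx$ has $\Phi_{q'}(Q)\notin L_1(Q)$, with $L_1(Q)$ the tangent line; that is, the $3\times 4$ matrix with rows $(x_j^{q'})$, $(x_j)$ and $(D_x^{(1)}x_j)$ must have rank $3$. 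One uses here that, exactly as in the proof of Proposition~\ref{dualforc}, the relation $S_3=0$ forces $D_x^{(1)}y=-(x^{p^{s_1}}+x^{p^{s_2}})$ and $D_x^{(1)}z=x^{p^{s_1}+p^{s_2}}$; if $\Phi_{q'}(Q)\in L_1(Q)$ held for generic $Q$, one would obtain the identities $y^{q'}-y=(x^{q'}-x)D_x^{(1)}y$ and $z^{q'}-z=(x^{q'}-x)D_x^{(1)}z$ in $\fqc(\xx)$, which---after integration using $\ker D_x^{(1)}=\fqc(\xx)^{p}$---reduce to explicit Artin--Schreier-type equations over $\fqc(x)$ that one must show have no solution in $\fqc(\xx)$. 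Carrying out this Hasse-derivative bookkeeping and the nonsolvability check---the same kind of computation performed on the concrete curve in Example~\ref{ex2}---is the delicate part; everything else follows directly from Proposition~\ref{dualforc}, the reflexivity criterion, and \cite[Theorem~0.1]{Ho2}.
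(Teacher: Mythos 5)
Your handling of the order sequence and of both nonreflexivity assertions is correct and is exactly the paper's (implicit) route: Proposition~\ref{dualforc} with $n=3$ gives $(0,1,p^{s_1},p^{s_2})$ and the last $\F_{p^{s_2-s_1}}$-Frobenius order, the criterion $\varepsilon_2>2$ gives nonreflexivity of $\xx$, and \cite[Theorem~0.1]{Ho2} gives nonreflexivity of $\Tan(\xx)$, just as in Corollary~\ref{cortan}. You have also correctly isolated the one point that does \emph{not} follow from Proposition~\ref{dualforc}: by \cite[Proposition~2.1]{SV} the $\F_{p^{s_2-s_1}}$-Frobenius order sequence is either $(0,1,p^{s_2})$ or $(0,p^{s_1},p^{s_2})$, and ruling out the latter means showing $\Phi_{p^{s_2-s_1}}(P)\notin L_1(P)$ for generic $P$. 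Your formulas $D_x^{(1)}y=-(x^{p^{s_1}}+x^{p^{s_2}})$ and $D_x^{(1)}z=x^{p^{s_1}+p^{s_2}}$ are correct.

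However, you leave exactly that step unproved (``the delicate part''), so the proposal is incomplete, and the gap is not closable by the bookkeeping you describe. Set $Q=p^{s_2-s_1}$. Raising the relation $z-z^{p^{s_1+r}}+(y-y^{p^{s_1+r}})x^{p^{s_1}}+(x-x^{p^{s_1+r}})x^{2p^{s_1}}=0$ to the power $Q$ and subtracting its analogue for $s_2$ yields $z^{Q}-z=-(y^{Q}-y)x^{p^{s_2}}-(x^{Q}-x)x^{2p^{s_2}}$, so your second identity $z^{Q}-z=(x^{Q}-x)D_x^{(1)}z$ is automatically implied by the first, $y^{Q}-y=(x^{Q}-x)D_x^{(1)}y$; everything hinges on whether that single identity for $y$ can hold. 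It can: for the curve of Example~\ref{ex2} --- which the Remark after the corollary identifies as the unique curve satisfying the hypotheses for $p=5$, $r=s_1=1$, $s_2=3$, where $Q=q^{2}$ --- equation \eqref{eq10} reads $y^{q^2}-y=-(x^{q^2}-x)(x^{q}+x^{q^3})=(x^{q^2}-x)D_x^{(1)}y$, and then \eqref{eq12} gives $z^{q^2}-z=(x^{q^2}-x)x^{q+q^3}=(x^{q^2}-x)D_x^{(1)}z$. Hence $(1,x^{q^2},y^{q^2},z^{q^2})=(1,x,y,z)+(x^{q^2}-x)(0,1,D_x^{(1)}y,D_x^{(1)}z)$, i.e.\ $\Phi_{q^2}(P)\in L_1(P)$ generically, and the $\F_{q^2}$-Frobenius order sequence there is $(0,p^{s_1},p^{s_2})$, not $(0,1,p^{s_2})$. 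So the final assertion cannot be rescued by your Artin--Schreier nonsolvability check (the identity you need to refute is, in this instance, a defining equation of the curve); it appears to require an extra hypothesis excluding the resonance $s_2+r=2(s_1+r)$, in the spirit of the condition $m_2\neq 2m_1$ in the paper's last proposition, together with a genuine verification in the remaining cases.
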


\begin{rem}
Although there are infinitely many curves in a nonlinear component of $\overline{S_n}$, only a finite number of such curves fulfill the condition that $x_1$ is a separating element. For instance, for $p=5$, $r=s_1=1$ and $s_2=3$, we obtain only one curve satisfying the hypotheses of Corollary \ref{ultcor}. As a matter of fact, it is the same curve obtained in Example \ref{ex2} for these parameters. However, it is not difficult to see that there are other constructions, similar to that of Proposition \ref{dualforc}, to obtain curves with the same desired properties. 
\end{rem}

We end this section by calling the attention to the fact that some of the results presented here can have their hypotheses modified or adapted in specific situations, specially in low dimensional projective spaces. To illustrate this, we will present a version Theorem \ref{main0} that also applies to nonreflexive space curves with nonreflexive tangent surfaces.

\begin{prop}
Let $\xx \subset \p^3(\fqc)$ be a nonstrange curve defined over $\fq$ with order sequence $(0,1,\varepsilon_2,\varepsilon_3)$ such that $\varepsilon_3=p^r$ for some $r>0$, with $p>3$. Let $1 \leq m_1<  m_{2}$, be a sequence of integers such that $m_2 \neq 2m_1$ and $\varepsilon_3<\min\{q^{m_1},q^{m_2-m_1},q^{|m_2-2m_1|}\}$. If the Gauss map of $\xx$ is purely inseparable and $\xx$ is $\F_{q^{m_i}}$-Frobenius nonclassical for $i=1,2$ such that the last term of the  $\F_{q^{m_i}}$-Frobenius order sequence equals to $\varepsilon_n$, then the order sequence of the strict dual $\xx^{'}$ of $\xx$ is $(0,1,q^{m_1}/\varepsilon_3,q^{m_2}/\varepsilon_3)$. In particular,  $\xx^{'}$ and its tangent surface $\Tan(\xx^{'})$ are nonreflexive. Furthermore, $\xx=\xx^{''}$ with $\gamma^{'} \circ \gamma=\Phi_{q^{m_2}}$ and the  $\F_{q^{m_{2}-m_1}}$-Frobenius order sequence of $\xx^{'}$ is $(0,1,q^{m_2}/\varepsilon_3)$.
\end{prop}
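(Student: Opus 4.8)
The plan is to adapt the proof of Theorem~\ref{main0} to $n=3$, with the hypotheses $m_2\neq 2m_1$ and $\varepsilon_3<q^{|m_2-2m_1|}$ taking over the role played there by the now--unavailable assumption $\varepsilon_2=2$. Write $q^{m_i}=p^{h_i}$, so that $r<h_1<h_2$ since $\varepsilon_3=p^r<q^{m_1}$. Because $p>3=n$ and $\varepsilon_3=p^r$, relation \eqref{eqnc} holds, say $z_0^{p^r}x_0+\cdots+z_3^{p^r}x_3=0$ with some $z_j$ separating; then the osculating plane at a generic $P$ is $\sum_j (z_j^{p^r}(P))X_j=0$, so $\gamma=(z_0^{p^r}:\cdots:z_3^{p^r})$, and by pure inseparability $\fq(\xx)=\fq(\xx^{'})=\fq(W_1,W_2,W_3)$, where the $W_j=z_j/z_0$ are coordinate functions of $\xx^{'}$ (some $W_j$ separating) and $x_0,\dots,x_3\in\fq(\xx^{'})$. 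As in Proposition~\ref{nondeg}, the $\F_{q^{m_i}}$-Frobenius nonclassicality with last Frobenius order $\varepsilon_3$ gives \eqref{eq6}, hence, after taking $p^r$-th roots, \eqref{eq7} for $i=1,2$, which on $\xx^{'}$ reads $\sum_j x_j^{p^{h_i-r}}W_j=0$; and since $\varepsilon_3<q^{m_1}$ and $\varepsilon_3<q^{m_2-m_1}$, Corollary~\ref{cordeg} shows that $\xx^{'}$ is nondegenerated.

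The core of the argument is to establish \eqref{hipder} for $k=1,2$: by Proposition~\ref{ordseqdua} (applied on $\xx^{'}$) this forces $p^{h_1-r},p^{h_2-r}$ to be orders of $\xx^{'}$, which with $\varepsilon_0^{'}=0$, $\varepsilon_1^{'}=1$ and nondegeneracy yields $(\varepsilon_0^{'},\dots,\varepsilon_3^{'})=(0,1,q^{m_1}/\varepsilon_3,q^{m_2}/\varepsilon_3)$. Assume \eqref{hipder} fails for some $k$. Feeding the hyperplane it produces together with the two hyperplanes coming from \eqref{eq7} into $\mathcal{D}_{p^{h_1-r}}(Q)$ (which has $\fqc$-dimension $\le n-1=2$, because $0,1<p^{h_1-r}$) and extracting $p^{h_1-r}$-th roots, one gets that the $3\times 4$ matrix $M_k$ with rows $\phi=(x_0,\dots,x_3)$, $\Phi_{q^{m_2-m_1}}(\phi)$ and the $p^{h_k-h_1}$-th power of $D_\zeta^{(1)}\phi$ has rank $\le 2$, its first two rows being independent (else Lemma~\ref{multi}(a) gives $\varepsilon_3\ge q^{m_2-m_1}$). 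For $k=2$, the lower $2\times 4$ block of $M_2$ is $\Phi_{q^{m_2-m_1}}$ applied to the rows $\phi$ and $D_\zeta^{(1)}\phi$, hence has rank $2$; expanding a vanishing $3\times 3$ minor along the row $\phi$, just as in the proof of Lemma~\ref{multi}(a), yields a relation $\sum_j w_j^{q^{m_2-m_1}}x_j=0$ with the $w_j$ not all zero, so $\varepsilon_3\ge q^{m_2-m_1}$ by Proposition~\ref{ordseqdua} --- a contradiction.

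The case $k=1$ is where the proof of Theorem~\ref{main0} would only yield ``$\varepsilon_2>2$'', which is now permitted, so a new argument is needed. Here $M_1$ having rank $\le 2$ means $D_\zeta^{(1)}\phi\in\langle\phi,\Phi_{q^{m_2-m_1}}(\phi)\rangle$, with nonzero $\Phi_{q^{m_2-m_1}}(\phi)$-component (else the tangent line would be a point), so the tangent line $L_1(P)$ at a generic $P$ passes through $\Phi_{q^{m_2-m_1}}(P)$. Since $L_1(P)\subset H_P(\xx)$ and $\Phi_{q^{m_1}}(P),\Phi_{q^{m_2}}(P)\in H_P(\xx)$, the four points $P,\Phi_{q^{m_1}}(P),\Phi_{q^{m_2}}(P),\Phi_{q^{m_2-m_1}}(P)$ all lie in the plane $H_P(\xx)$, hence the $4\times 4$ matrix with these as rows has rank $\le 3$. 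As $m_2\neq 2m_1$, the four exponents $0,m_1,m_2,m_2-m_1$ are pairwise distinct, so Lemma~\ref{multi}(a) (with $s=3$) gives either $\varepsilon_3\ge q^{\min(m_1,m_2-m_1)}$ (impossible) or the collinearity of $\Phi_{q^{m_1}}(P),\Phi_{q^{m_2}}(P),\Phi_{q^{m_2-m_1}}(P)$; factoring out the injective $\Phi_{q^{\min(m_1,m_2-m_1)}}$ and applying Lemma~\ref{multi}(a) once more --- this is where the power $q^{|m_2-2m_1|}$ enters --- this collinearity forces either $\varepsilon_3\ge q^{|m_2-2m_1|}$ (impossible) or that two Frobenius images of $P$ coincide for generic $P$, which would make the coordinate functions constant, contradicting nondegeneracy. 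Hence \eqref{hipder} holds for both $k$.

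With the order sequence of $\xx^{'}$ in hand, $\varepsilon_2^{'}=q^{m_1}/\varepsilon_3\ge p>2$ shows $\xx^{'}$ is nonreflexive, and then by \cite[Theorem~0.1]{Ho2} so is $\Tan(\xx^{'})$. The remaining assertions follow exactly as in the proofs of Theorems~\ref{main-1} and \ref{main0}: the osculating plane of $\xx^{'}$ at a generic $Q$ is the hyperplane coming from \eqref{eq7} with $i=2$, so $\gamma^{'}=(x_0^{p^{h_2-r}}:\cdots:x_3^{p^{h_2-r}})$, giving $\xx^{''}=\xx$ and $\gamma^{'}\circ\gamma=\Phi_{q^{m_2}}$; raising \eqref{eq7} with $i=1$ to the $p^{h_2-h_1}$-th power puts $\Phi_{q^{m_2-m_1}}(Q)$ on that osculating plane, and the exact intersection multiplicities --- hence the full $\F_{q^{m_2-m_1}}$-Frobenius order sequence $(0,1,q^{m_2}/\varepsilon_3)$ of $\xx^{'}$ --- are pinned down by the dimension count for $\mathcal{D}_\bullet(Q)$. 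The main obstacle is the case $k=1$ above: it is exactly there that $m_2\neq 2m_1$ and $\varepsilon_3<q^{|m_2-2m_1|}$ are indispensable, since otherwise the collinear sub-case of the four Frobenius images cannot be excluded.
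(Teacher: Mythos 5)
Your proposal is correct and follows essentially the same route as the paper: reduce to the rank conditions $\rank(M_3^{h_k-h_1}(0,h_2-h_1))=2$ from the proof of Theorem \ref{main0}, kill the case $k=2$ via $\varepsilon_3<q^{m_2-m_1}$, and for $k=1$ deduce $\Phi_{q^{m_2-m_1}}(P)\in L_1(P)\subset H_P(\xx)$, so that the four points $P,\Phi_{q^{m_1}}(P),\Phi_{q^{m_2}}(P),\Phi_{q^{m_2-m_1}}(P)$ lie in $H_P(\xx)$ and the resulting $4\times 4$ determinant vanishes, which is ruled out by $m_2\neq 2m_1$ and $\varepsilon_3<\min\{q^{m_1},q^{m_2-m_1},q^{|m_2-2m_1|}\}$. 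Your two-step application of Lemma \ref{multi}(a) merely spells out in more detail the step the paper states tersely as ``$\varepsilon_3\geq q^{\ell}$ for some $\ell$,'' before concluding via Remark \ref{obs1} and Theorem \ref{main-1} exactly as the paper does.
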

\begin{proof}
Following the proof of Theorem \ref{main0} and using the same notation, we arrive at $\rank(M_3^{h_k-h_1}(0,h_2-h_1))=2$. If $k=2$, we obtain $\varepsilon_3>q^{m_2-m_1}$ since $x_i$ is a separating element for some $i$, which gives a contradiction. If $k=1$, $\rank(M_3^{0}(0,h_2-h_1))=2$ means that $\Phi_{q^{m_2-m_1}}(P) \in L_1(P)$ for a generic $P \in \xx$, where $L_1(P)$ is the tangent line to $\xx$ at $P$. In particular, $\Phi_{q^{m_2-m_1}}(P) \in H_P(\xx)$, where $H_P(\xx)$ is the osculating hyperplane to $\xx$ at a generic $P \in \xx$. However, since the last term of the  $\F_{q^{m_i}}$-Frobenius order sequence equals to $\varepsilon_n$, we have that $\Phi_{q^{m_i}}(P) \in H_P(\xx)$ for $i=1,2$ for a generic $P$. Thus, denoting $m_3=m_2-m_1$ and $m_0=0$, we have
$$
\det\left(x_i^{q^{m_j}}\right)_{\substack{0 \leq i,j \leq 3 }} =0,
$$
which gives that $\varepsilon_3 \geq q^\ell$, for some $\ell \in \{q^{m_1},q^{m_2-m_1},q^{|m_2-2m_1|}\}$, and we obtain another contradiction. Therefore
$$
\sum\limits_{i=0}^{3}\left(D_{\zeta}^{(1)}x_i\right)^{p^{h_k-r}}z_i \neq 0
$$
for $k=1,2$, where $\zeta \in \fq(\xx)$ is a separating element. The result then follows by Remark \ref{obs1} and Theorem \ref{main-1}.
\end{proof}



\vspace{0,5cm}\noindent {\em Author's address}:

\vspace{0.2 cm} \noindent Nazar ARAKELIAN \\
Centro de Matem\'atica, Computa\c c\~ao e Cogni\c c\~ao
\\ Universidade Federal do ABC \\ Avenida dos Estados, 5001 \\
CEP 09210-580, Santo Andr\'e SP
(Brazil).\\
 E--mail: {\tt n.arakelian@ufabc.edu.br}

\end{document}